\theoremstyle{definition}
\newtheorem{definition}{Definition}[section]
\newtheorem{remark}[definition]{Remark}
\newtheorem*{ackname}{Acknowledgment}
\theoremstyle{theorem}
\newtheorem{theorem}[definition]{Theorem}
\newtheorem{corollary}[definition]{Corollary}
\newtheorem{lemma}[definition]{Lemma}
\newtheorem{proposition}[definition]{Proposition}
\renewcommand{\phi}{\varphi}
\renewcommand{\Re}{\mathrm{Re}\hspace{1pt}}
\renewcommand{\Im}{\mathrm{Im}\hspace{1pt}}
\renewcommand{\epsilon}{\varepsilon}
\renewcommand{\phi}{\varphi}
\DeclareMathOperator{\meas}{meas}
\DeclareMathOperator{\dist}{dist}
\newcommand{\te}{\tilde{\eta}}
\newcommand{\EXP}{\mathbb{E}}
\newcommand{\Li}{\operatorname{Li}}
\newcommand{\AND}{\quad {\rm{and}} \quad}
\begin{document}
\title[Universality theorem for the iterated integrals of the logarithm of $\zeta(s)$]{Universality theorem for the iterated integrals of the logarithm of the Riemann zeta-function}

\author[K.~Endo]{Kenta Endo}

\subjclass[2020]{11M41}

\keywords{Riemann zeta-function, Universality theorem for zeta-functions.}

\maketitle

\begin{abstract}
In this paper, 
we prove the universality theorem for the iterated integrals of the logarithm of the Riemann zeta-function on some line parallel to the real axis.
\end{abstract}

\section{Introduction and main results}
Let $\zeta(s)$ denote the Riemann zeta-function. 
In this paper, 
we consider the function $\te_m(s)$ which is given by
\[
\te_m(\sigma + it) = \int_{\sigma}^{\infty} \te_{m -1} (\alpha + it) d\alpha \quad \textrm{and} \quad 
\te_{0}(\sigma + it) = \log \zeta (\sigma + it) = \int_{\infty}^{\sigma} \frac{\zeta'}{\zeta} (\alpha + it) d\alpha
\]
for $s = \sigma + it \in G$, 
where $G$ is defined by
\[
G 
=\mathbb{C} \setminus \left\{ \left( \bigcup_{\rho = \beta + i \gamma} \left\{ s = \sigma + i \gamma; \sigma \leq \beta \right\}  \right) \cup  (- \infty, 1]   \right\}.
\]
Here $\rho = \beta + i \gamma$ denote the non-trivial zeros of the Riemann zeta-function $\zeta(s)$.
This function $\te_m (s)$ is firstly introduced by Inoue \cite{I2019} who investigated various properties of it.
Recently, he and the author \cite{EI2020} showed that the set $\left\{ \te_m( \sigma + it); t \in \mathbb{R} \right\}$
is dense in the complex plane for $m \geq 1$ and $1/2 \leq \sigma <1$ by using the mean value theorem in \cite[Theorem 5]{I2019}. 
We note that Bohr \cite{B1916} proved classically that the set $\left\{ \log \zeta(\sigma+ it) ~;~ t \in \mathbb{R} \right\}$ is dense in the complex plane for $1/2 < \sigma < 1$,  
and it remains to be determined whether or not the set $\left\{ \log \zeta(1/2+ it) ~;~ t \in \mathbb{R} \right\}$ is dense in the complex plane. 

Historically, Bohr's result has been developed to the Bohr-Jessen limit theorem \cite{BJ1930, JW1935}, Voronin's multidimensional denseness theorem \cite{V1972} and Voronin's universality theorem \cite{V1975}.
Our goal in this paper is to show that the universality theorem for $\te_m(s)$ in the strip $\mathcal{D} = \{s = \sigma + it ~;~ 1/2 < \sigma < 1\}$.
We refer to the textbooks \cite{KV1992, L1996, S2007, K2015} for the theory of the universality theorem and refer to a survey paper \cite{M2015} for the recent studies. 

For other recent studies on $\te_m(s)$, 
we remark that Inoue \cite{I2020} studied on the extreme values and the large deviation estimate like \cite{L2011}.
Inoue, Mine and the author \cite{EIM2021} studied on the probability density function, the discrepancy and the large deviation estimates like \cite{BJ1930, LLR2019}.

Now, we state the main theorem of the present paper. 

\begin{theorem}\label{thm:MTH}%Main theorem
Let $m$ be a non-negative integer, $\mathcal{K}$ be a compact subset of $\mathcal{D}$ with connected complement, 
and let $f(s)$ be a continuous function on $\mathcal{K}$ that is holomorphic in the interior of $\mathcal{K}$.
Then for any $\epsilon>0$ we have
\begin{equation}\label{eqn:UTE}%Universality Theorem for Eta
\liminf_{T \rightarrow \infty} \frac{1}{T} \meas\left\{ \tau \in [T,2T] ~;~ \sup_{s \in \mathcal{K}} \left| \te_{m} (s + i \tau) - f(s) \right| < \epsilon \right\}>0,
\end{equation}
where $\meas$ denotes the Lebesgue measure.
\end{theorem}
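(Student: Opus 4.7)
My plan is to follow the probabilistic framework of Bagchi--Voronin, adapted to $\te_m$. The starting observation is that iterating the Dirichlet series for $\log \zeta$ termwise gives, for $\sigma > 1$, the absolutely convergent representation
\[
\te_m(s) = \sum_{n \geq 2} \frac{\Lambda(n)}{(\log n)^{m+1} n^s} = \sum_p \sum_{k=1}^{\infty} \frac{1}{k^{m+1} (\log p)^m p^{ks}}.
\]
Since these coefficients are pointwise majorized by those of $\log \zeta$, standard second-moment estimates in the strip $\mathcal{D}$ carry over. Using the mean value theorem of \cite{I2019} already exploited in \cite{EI2020}, I would first show that the truncation $\te_{m,X}(s) := \sum_{n \leq X} \Lambda(n) (\log n)^{-m-1} n^{-s}$ satisfies
\[
\lim_{X \to \infty} \limsup_{T \to \infty} \frac{1}{T} \int_T^{2T} \sup_{s \in \mathcal{K}} \left| \te_m(s+i\tau) - \te_{m,X}(s+i\tau) \right| d\tau = 0.
\]

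Next I would introduce a random model on the infinite torus $\Omega = \prod_p \mathbb{T}$ (with Haar measure $\mu$) by setting
\[
\te_m(s, \omega) = \sum_p \sum_{k=1}^\infty \frac{\omega(p)^k}{k^{m+1}(\log p)^m p^{ks}},
\]
which converges in $H(\mathcal{D})$ almost surely. Using Kronecker's theorem on the linear independence of $\{\log p\}_p$ over $\mathbb{Q}$ for the finite Dirichlet polynomial approximants, and then passing to the limit by combining the approximation step above with a Prokhorov tightness argument on compact subsets of $\mathcal{D}$, one deduces that the probability measures $\mathbb{P}_T$ on $H(\mathcal{D})$ induced by $\tau \mapsto \te_m(\cdot + i\tau)$ for $\tau \in [T,2T]$ converge weakly as $T \to \infty$ to the law $\mathbb{P}$ of $\omega \mapsto \te_m(\cdot, \omega)$.

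The main obstacle is identifying the support of $\mathbb{P}$, which must be all of $H(\mathcal{D})$ since Theorem \ref{thm:MTH} imposes no non-vanishing condition on $f$. I would follow the Pechersky rearrangement method: by Mergelyan's theorem it suffices to approximate a polynomial target, and the task reduces to showing that the prime-indexed vectors $f_p(s,\omega) = \sum_{k \geq 1} \omega(p)^k / (k^{m+1}(\log p)^m p^{ks})$ form a conditionally convergent family in $H(\mathcal{D})$ whose rearrangements are dense. Two points need checking: (i) the tails are large enough, namely $\sum_p \|f_p\|^2 = \infty$ in the relevant $L^2$ norm on $\mathcal{K}$ --- here the extra $(\log p)^{-m}$ factor compared to the classical Voronin setting is only a mild perturbation and still leaves the sum divergent for any $\mathcal{K} \subset \mathcal{D}$; and (ii) the closed linear span of the leading vectors $\{p^{-s}\}_p$ is dense in $H(\mathcal{D})$, which follows from a Hahn--Banach / analytic functional argument that is insensitive to the $(\log p)^{-m}$ weight. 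Once full support is established, applying the portmanteau theorem to the open set $\{ g \in H(\mathcal{D}) : \sup_{s \in \mathcal{K}} |g(s) - f(s)| < \epsilon\}$ produces the positive $\liminf$ in \eqref{eqn:UTE}.
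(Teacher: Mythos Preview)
Your overall architecture---random model on $\Omega=\prod_p\gamma_p$, weak convergence of the empirical law to the law of $\te_m(\cdot,\omega)$, identification of the support as all of $\mathcal H(\mathcal R)$, then Mergelyan plus the portmanteau theorem---is exactly the route the paper takes. Two points, however, deserve attention.

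\textbf{The domain-of-holomorphy issue.} Your approximation step
\[
\lim_{X\to\infty}\limsup_{T\to\infty}\frac{1}{T}\int_T^{2T}\sup_{s\in\mathcal K}\bigl|\te_m(s+i\tau)-\te_{m,X}(s+i\tau)\bigr|\,d\tau=0
\]
is stated over all of $[T,2T]$, but without the Riemann Hypothesis $\te_m(s+i\tau)$ need not even be holomorphic on $\mathcal K$ for every such $\tau$: any non-trivial zero $\rho=\beta+i\gamma$ with $\beta>\min_{s\in\mathcal K}\Re(s)$ produces a branch cut. The paper deals with this by restricting to the set $\mathcal I_{\mathcal K}(T)=\mathcal G_{\sigma_0(\mathcal K),|\mathcal K|+1}\cap[T,2T]$ of good shifts, showing via a zero-density estimate that $\meas(\mathcal I_{\mathcal K}(T))\sim T$, and then proving the $L^1$ approximation on a slightly smaller set $\mathscr X_{\mathcal K}(T)$ by means of a Granville--Soundararajan lemma (their Lemmas~2.3--2.5) rather than the second-moment result of \cite{I2019}. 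Your sketch skips this entirely; it is not a deep obstacle, but it is a genuine technical step that must be inserted before the integral above makes sense.

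\textbf{The support condition.} In your point~(i) you claim $\sum_p\|f_p\|^2=\infty$ ``in the relevant $L^2$ norm on $\mathcal K$''. That is false: since $\mathcal K$ is compact in $\mathcal D$, there is $\sigma_L>1/2$ with $\Re(s)\geq\sigma_L$ on $\mathcal K$, so $\|f_p\|^2\asymp p^{-2\sigma_L}(\log p)^{-2m}$ and the sum over primes \emph{converges}. What the density lemma actually needs (and what the paper verifies) is the opposite inequality $\sum_p\sup_K|f_p|^2<\infty$ together with a completeness condition on the family $\{p^{-s}(\log p)^{-m}\}_p$. The paper establishes the latter via an entire-function/Bernstein-theorem argument (their Lemmas~2.11--2.13), which is precisely the ``Hahn--Banach / analytic functional'' step you allude to in~(ii); your~(ii) is correct in spirit, but~(i) as written is the wrong hypothesis and should be replaced by the finiteness condition plus the measure-annihilation criterion.
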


We remark that, in the set lying on the left hand side of \eqref{eqn:UTE}, 
the set of all $\tau \in [0,T]$ such that the function $\te_m(s + i \tau)$ is not holomorphic for some $s \in \mathcal{K}$ is excluded. 

Using this theorem, 
we have the following corollaries.

\begin{corollary}\label{cor:1}
Let $m$ be a non-negative integer, $n$ be a positive integer and $1/2 < \sigma <1$.
Then the set
\[
\left\{ \left( \te_m( \sigma + it), \te_m'( \sigma + it), \ldots, \te_m^{(n-1)}( \sigma + it) \right) \in \mathbb{C}^n~;~ t \in \mathbb{R} \right\}
\]
is dense in $\mathbb{C}^n$.
\end{corollary}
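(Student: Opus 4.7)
The plan is to reduce Corollary \ref{cor:1} to Theorem \ref{thm:MTH} by a standard Cauchy-integral argument, using polynomial interpolation to produce the target holomorphic function.

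Fix a point $(a_0, a_1, \ldots, a_{n-1}) \in \mathbb{C}^n$ and a real number $\epsilon > 0$. I would first choose a closed disk $\mathcal{K} = \{s \in \mathbb{C} : |s - \sigma| \leq r\}$ of radius $r$ small enough that $\mathcal{K} \subset \mathcal{D}$; since a closed disk has connected complement, $\mathcal{K}$ is admissible for Theorem \ref{thm:MTH}. Next I would define the polynomial
\[
f(s) = \sum_{k=0}^{n-1} \frac{a_k}{k!} (s - \sigma)^k,
\]
which is entire (hence continuous on $\mathcal{K}$ and holomorphic in its interior) and satisfies $f^{(k)}(\sigma) = a_k$ for $k = 0, 1, \ldots, n-1$.

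Applying Theorem \ref{thm:MTH} to this choice of $\mathcal{K}$ and $f$ with an auxiliary parameter $\delta > 0$ to be chosen, I obtain a set of $\tau$'s of positive lower density for which $\te_m(\cdot + i\tau)$ is holomorphic on a neighborhood of $\mathcal{K}$ and
\[
\sup_{s \in \mathcal{K}} \left| \te_m(s + i\tau) - f(s) \right| < \delta.
\]
The function $g_\tau(s) := \te_m(s + i\tau) - f(s)$ is then holomorphic on a neighborhood of $\mathcal{K}$, and by Cauchy's integral formula, for each $k = 0, \ldots, n-1$,
\[
g_\tau^{(k)}(\sigma) = \frac{k!}{2\pi i} \oint_{|z - \sigma| = r} \frac{g_\tau(z)}{(z - \sigma)^{k+1}} \, dz,
\]
which yields $|g_\tau^{(k)}(\sigma)| \leq k! \, \delta / r^k$. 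Since the derivative commutes with the shift $s \mapsto s + i\tau$, we have $g_\tau^{(k)}(\sigma) = \te_m^{(k)}(\sigma + i\tau) - a_k$.

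Thus, choosing $\delta$ so that $k! \, \delta / r^k < \epsilon$ for every $k \in \{0, \ldots, n-1\}$ (for instance $\delta < \epsilon \, r^{n-1} / (n-1)!$ when $r \leq 1$, and analogously otherwise), we obtain $|\te_m^{(k)}(\sigma + i\tau) - a_k| < \epsilon$ simultaneously for all $k$. Since the set of such $\tau$ has positive lower density, it is non-empty, proving density in $\mathbb{C}^n$. There is no real obstacle here beyond verifying that the disk $\mathcal{K}$ fits in $\mathcal{D}$ and that the polynomial $f$ is admissible; the heavy lifting is entirely contained in Theorem \ref{thm:MTH}.
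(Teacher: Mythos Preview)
Your argument is correct and is precisely the standard route that the paper has in mind when it cites \cite[Chapter~VII~\S2]{KV1992} and omits the details: pick a small disk $\mathcal{K}\subset\mathcal{D}$ centered at $\sigma$, take the Taylor polynomial with the prescribed jet as the target function, apply Theorem~\ref{thm:MTH}, and then use Cauchy's estimates on the boundary circle to transfer the uniform approximation to the derivatives. One minor remark: the Cauchy integral over $|z-\sigma|=r$ requires holomorphicity of $\te_m(\cdot+i\tau)$ on (an open set containing) the closed disk, but this is indeed guaranteed by the paper's convention that only shifts $\tau$ with $\mathcal{K}+i\tau\subset G$ are counted; alternatively one can simply apply Theorem~\ref{thm:MTH} on a slightly larger disk and integrate over the circle of radius $r$.
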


\begin{corollary}\label{cor:2}
Let $m$ be a non-negative integer, $n$ be a positive integer and $1/2 < \sigma <1$.
If $F_0, F_1, \ldots , F_J  : \mathbb{C}^n \rightarrow \mathbb{C}$ are continuous functions and satisfy
\[
\sum_{j = 0}^J s^j F_j \left( \te_m( s ), \te_m'( s), \ldots, \te_m^{(n-1)}(s) \right) \equiv 0
\]
for all $s \in \mathbb{C}$, then $F_0, F_1, \ldots , F_J \equiv 0$.
\end{corollary}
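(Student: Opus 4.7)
The plan is to derive Corollary \ref{cor:2} from Theorem \ref{thm:MTH} by the classical Voronin-type functional independence argument. I will argue by contradiction: suppose some $F_j$ is not identically zero, and let $J$ denote the largest such index (so the assumed identity has nonzero top coefficient $F_J$). Pick $\mathbf{a} = (a_0, \ldots, a_{n-1}) \in \mathbb{C}^n$ with $F_J(\mathbf{a}) \neq 0$. By continuity of $F_0, \ldots, F_J$, there exist $\delta, c, M > 0$ such that every $\mathbf{b}$ in the open ball $B(\mathbf{a}, \delta) \subset \mathbb{C}^n$ satisfies $|F_J(\mathbf{b})| \geq c$ and $|F_j(\mathbf{b})| \leq M$ for all $j \leq J$.

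Next, fix $\sigma_0 \in (1/2, 1)$ and a radius $r > 0$ small enough that the closed disk $\mathcal{K} = \{s \in \mathbb{C} : |s - \sigma_0| \leq r\}$ lies inside $\mathcal{D}$. Consider the polynomial
\[
f(s) = \sum_{l=0}^{n-1} \frac{a_l}{l!}(s - \sigma_0)^l,
\]
which satisfies $f^{(l)}(\sigma_0) = a_l$ for $l = 0, \ldots, n-1$. Applying Theorem \ref{thm:MTH} to the pair $(\mathcal{K}, f)$, for every $\epsilon > 0$ the set of $\tau \in [T, 2T]$ with $\sup_{s \in \mathcal{K}} |\te_m(s + i\tau) - f(s)| < \epsilon$ has positive lower density. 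A Cauchy integral estimate on a slightly smaller disk then transfers this uniform closeness into closeness of the first $n - 1$ derivatives at the center: by taking $\epsilon$ sufficiently small, one forces the vector $\mathbf{b}(\tau) := (\te_m(\sigma_0 + i\tau), \ldots, \te_m^{(n-1)}(\sigma_0 + i\tau))$ to lie in $B(\mathbf{a}, \delta)$ for every such $\tau$.

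To conclude, select a sequence $\tau_k \to \infty$ of such $\tau$'s, substitute $s = \sigma_0 + i\tau_k$ into the assumed identity, and divide by $(\sigma_0 + i\tau_k)^J$, obtaining
\[
F_J(\mathbf{b}(\tau_k)) = -\sum_{j=0}^{J-1} (\sigma_0 + i\tau_k)^{j-J} F_j(\mathbf{b}(\tau_k)).
\]
As $k \to \infty$, the right-hand side tends to $0$ because each factor $(\sigma_0 + i\tau_k)^{j-J}$ vanishes while $|F_j(\mathbf{b}(\tau_k))|$ is bounded by $M$; however $|F_J(\mathbf{b}(\tau_k))| \geq c > 0$, a contradiction. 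This forces $F_J \equiv 0$, contrary to the choice of $J$, so every $F_j$ vanishes identically. The only step not immediately supplied by Theorem \ref{thm:MTH} is the passage from uniform approximation of $\te_m$ on $\mathcal{K}$ to simultaneous approximation of its first $n-1$ derivatives at the center; this is the main (though routine) technical point, and it is handled by a standard Cauchy estimate on a slightly larger contour.
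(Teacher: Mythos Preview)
Your argument is correct and is precisely the classical Voronin--type functional independence argument that the paper has in mind: the paper does not write out a proof at all but simply refers to \cite[Chapter VII, \S2]{KV1992}, and your proposal reproduces that standard route (universality $\Rightarrow$ simultaneous approximation of derivatives via Cauchy's estimate $\Rightarrow$ contradiction from the leading term as $|\tau|\to\infty$). One cosmetic remark: the phrase ``on a slightly larger contour'' at the end should read ``on a circle inside $\mathcal{K}$'' (you said it correctly earlier); also bear in mind that the identity is only assumed on the domain $G$ where $\te_m$ is defined, so when you substitute $s=\sigma_0+i\tau_k$ you are implicitly using that the $\tau_k$ produced by Theorem~\ref{thm:MTH} give $\sigma_0+i\tau_k\in G$, which is indeed built into the statement of that theorem.
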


These corollaries can be proved along the same line as in \cite[Chapter VII $\S 2$]{KV1992}.
Therefore we omit the proofs.

\section{Proof of Theorem \ref{thm:MTH}}
\subsection{Preliminaries}
We begin with giving some definitions and notations.
Let us fix a compact subset $\mathcal{K}$ of $\mathcal{D}$ with connected complement. 
Since, without the assumption of the Riemann hypothesis, 
we can not find whether or not the function $\te_m(s + i \tau)$ is always holomorphic on $s \in \mathcal{K}$, 
we need to use the shift $\tau$ which implies the function $\te_m(s + i \tau)$ is holomorphic on $s \in \mathcal{K}$. 
For such a reason, 
we first prepare the following notations; 
\begin{itemize}
\item $| \mathcal{K}  |: = \max_{s \in \mathcal{K}} \Im(s) - \min_{s \in \mathcal{K}} \Im(s)$. 
\item $\tau_0(\mathcal{K}) = 1/2( \max_{s \in \mathcal{K}} \Im(s) + \min_{s \in \mathcal{K}} \Im(s) )$.
\item $\sigma_0(\mathcal{K})= 1/2 (1/2 + \min_{s \in \mathcal{K}} \Re(s))$.
\item For any $\Delta >0$, put 
\begin{equation}\label{eqn:HR}%Holomorphic Region
\mathcal{G}_{\sigma_0(\mathcal{K}),\Delta}
= \mathbb{R} \setminus \left\{ \left( \bigcup_{\substack{\rho = \beta + i \gamma;\\ \beta > \sigma_0(\mathcal{K})}} \left(\gamma - \tau_0(\mathcal{K}) - \Delta , \gamma - \tau_0(\mathcal{K}) + \Delta \right) \right) \cup (- \tau_0(\mathcal{K}) - \Delta, - \tau_0(\mathcal{K}) + \Delta) \right\}. 
\end{equation}
\item For any $T>0$, 
let $\mathcal{I}_{\mathcal{K}}(T) =  \mathcal{G}_{\sigma_0(\mathcal{K}), |\mathcal{K}| +1}  \cap [T, 2T]$.
\end{itemize}
Note that $\tau \in \mathcal{G}_{\sigma_0(\mathcal{K}), |\mathcal{K}| +1}$ implies $\mathcal{K} + i \tau \subset G$.
Thus the function $\te_m(s + i \tau)$ is holomorphic on $s \in \mathcal{K}$ when $\tau \in \mathcal{G}_{\sigma_0(\mathcal{K}), |\mathcal{K}| +1}$. 
We further note that $\meas (\mathcal{I}_{\mathcal{K}}) \sim T$ holds as $T \rightarrow \infty$ by using the zero density estimate. 
Remark that, assuming the Riemann Hypothesis, $\mathcal{I}_{\mathcal{K}} (T) = [T, 2T]$ holds.

Let $\gamma$ denote the unit circle on the complex plane 
and define $\Omega$ by $\Omega = \prod_{p} \gamma_p$, where the product runs over all prime numbers $p$ and $\gamma_p = \gamma$. 
Since $\gamma_p$ is a compact topological abelian group,
there exists the unique probability Haar measure $\mathbf{m}_p$ on $(\gamma_p, \mathcal{B}(\gamma_p))$.
Here, $\mathcal{B}(S)$ denote the Borel $\sigma$-field of the topological space $S$.
Using the Kolmogorov extension theorem, we can construct the probability Haar measure $\mathbf{m} = \otimes_p \mathbf{m}_p$ on $(\Omega, \mathcal{B}(\Omega))$.
For any $\omega = ( \omega(p) )_{p} \in \Omega$ and $n\in \mathbb{N}$ with $n \geq 2$, let
\[
\omega(n) = \prod_{j = 1}^k \omega(p_j)^{r_j},
\]
where $n = p_1^{r_1} \cdots p_k^{r_k}$ is the prime factorization of $n$.

Fix a non-negative integer $m$ and let $T$ be a positive real number. 
We fix real numbers $\sigma_L = \sigma_L(\mathcal{K})$ and $\sigma_R = \sigma_R(\mathcal{K})$ satisfying
\[
\sigma_0(\mathcal{K}) < \sigma_L < \min_{s \in \mathcal{K}}\Re (s) 
\quad \textrm{and} \quad 
\max_{s \in \mathcal{K}} \Re(s) < \sigma_R < 1,
\]
and denote by $\mathcal{R} = \mathcal{R} (\mathcal{K})$ the rectangle
\begin{equation}\label{eqn:RECT}
\mathcal{R}
= (\sigma_L, \sigma_R) \times i \left( \min_{s \in \mathcal{K}} \Im (s) - 1 /2, \max_{s \in \mathcal{K}} \Im(s) + 1/2 \right),
\end{equation}
which includes $\mathcal{K}$. 
Note that $\tau \in \mathcal{G}_{\sigma_0(\mathcal{K}), |\mathcal{K}| +1}$ implies $\mathcal{R} + i \tau \subset G$ by the definition of $\mathcal{R}$. 
Let $\mathcal{H}(\mathcal{R})$ denote the set of holomorphic functions on $\mathcal{R}$ equipped with the topology of uniform convergence on compact subsets.
This topology is metrizable in the following way;
We take a sequence of compact sets $\{K_j\}_{j =1}^\infty$ satisfying
\begin{itemize}
\item $\mathcal{R} = \bigcup_{j =1}^\infty K_j$,
\item $K_j \subset K_{j+1}^{\circ}$ for any $j \in \mathbb{N}$, 
where $A^{\circ}$ is the interior of the set $A$,
\item for any compact subset $K$, there exist $j \in \mathbb{N}$ such that $K \subset K_j$,
\end{itemize}
and define the metrics $d_j$, $j = 1, 2, \ldots$ on $\mathcal{H}(\mathcal{R})^2$ by $d_j (f, g) = \sup_{s \in K_j} \left| f(s) - g(s) \right|$ for $f, g \in \mathcal{H}(\mathcal{R})$.
For the metric sequence $\{d_j\}_{j =1}^\infty$, 
we put 
\[
d (f, g)
= \sum_{j =1}^\infty \frac{\min\left\{ d_j ( f, g ) , 1\right\}}{2^j}
\]
for $f, g \in \mathcal{H}(\mathcal{R})$.
We find that $d$ is a metric on $\mathcal{H}(\mathcal{R})$ and induces the desired topology.

We define the probability measures $\mathcal{Q}_T$ and $\mathcal{Q}$ on $(\mathcal{H}(\mathcal{R}), \mathcal{B}(\mathcal{H}(\mathcal{R})))$ by
\begin{align*}
\mathcal{Q}_T (A)
= \frac{1}{\meas\left(\mathcal{I}_{\mathcal{K}}(T) \right)} \meas \left\{ \tau \in \mathcal{I}_{\mathcal{K}}(T)  ; \te_m (s + i \tau ) \in A \right\},
\end{align*}
\[
\mathcal{Q}(A)
= \mathbf{m}\left\{ \omega \in \Omega ; \te_m(s, \omega) \in A \right\}
\]
for $A \in \mathcal{B}(\mathcal{H}(\mathcal{R}))$,
where $\mathcal{H}(\mathcal{R})$-valued random variables $\te_m(s,\omega)$ is defined by
\[
\te_m(s,\omega)
= \sum_{n =2}^\infty \frac{\Lambda (n) \omega (n) }{n^{s} (\log p)^{m +1} }.
\] 
Here $\Lambda(n)$ denotes the von Mangoldt function given by 
\[
\Lambda(n)
= \begin{cases}
\log p & \textrm{if $n$ is a power of a prime number $p$},\\ 
 0 & \textrm{otherwise}.
\end{cases}
\]

We shall show the following two propositions.
\begin{proposition}\label{prop:LT}%Limit theorem
The probability measure $\mathcal{Q}_T$ on $(\mathcal{H}(\mathcal{R}), \mathcal{B}(\mathcal{H}(\mathcal{R})))$ converges weakly to $\mathcal{Q}$ as $T \rightarrow \infty$.
\end{proposition}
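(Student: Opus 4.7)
The plan is to follow the classical Bagchi-type approximation scheme adapted to $\te_m$. For a real parameter $X \geq 2$ introduce a smooth cutoff, for instance $v_X(n) = \exp(-(n/X)^2)$, and define the truncated functions
\[
\te_{m, X}(s) = \sum_{n = 2}^{\infty} \frac{\Lambda(n) v_X(n)}{n^s (\log n)^{m+1}}, \qquad \te_{m, X}(s, \omega) = \sum_{n = 2}^{\infty} \frac{\Lambda(n) \omega(n) v_X(n)}{n^s (\log n)^{m+1}},
\]
both of which converge absolutely on all of $\mathbb{C}$ and therefore belong to $\mathcal{H}(\mathcal{R})$ unconditionally. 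Write $\mathcal{Q}_T^X$ and $\mathcal{Q}^X$ for the laws on $(\mathcal{H}(\mathcal{R}), \mathcal{B}(\mathcal{H}(\mathcal{R})))$ induced by $\te_{m,X}(s + i\tau)$ (with $\tau$ uniform in $\mathcal{I}_{\mathcal{K}}(T)$) and by $\te_{m,X}(s, \omega)$, respectively.

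Second, I would prove that $\mathcal{Q}_T^X$ converges weakly to $\mathcal{Q}^X$ as $T \to \infty$ for each fixed $X$. This is the standard step: the $\mathbb{Q}$-linear independence of $\{\log p\}_p$ gives, via Weyl equidistribution, that $(p^{-i\tau})_{p \leq Y}$ is uniformly distributed on $\prod_{p \leq Y} \gamma_p$ as $\tau$ ranges over $[T, 2T]$, so the finite-dimensional distributions of $\te_{m,X}(\,\cdot\, + i\tau)$ converge to those of $\te_{m,X}(\,\cdot\,, \omega)$; the rapid decay of $v_X$ controls the tail over large primes and upgrades this to weak convergence in $\mathcal{H}(\mathcal{R})$. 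Replacing $[T,2T]$ by $\mathcal{I}_{\mathcal{K}}(T)$ costs nothing since $\meas(\mathcal{I}_{\mathcal{K}}(T)) \sim T$.

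The third and crucial step is to establish the approximation
\[
\lim_{X \to \infty} \limsup_{T \to \infty} \frac{1}{T} \int_{\mathcal{I}_{\mathcal{K}}(T)} d \bigl( \te_m(\,\cdot\, + i\tau),\, \te_{m, X}(\,\cdot\, + i\tau) \bigr)\, d\tau = 0,
\]
together with its probabilistic counterpart $\lim_{X\to\infty} \EXP_{\mathbf{m}}\, d(\te_m(\,\cdot\,, \omega), \te_{m,X}(\,\cdot\,,\omega)) = 0$. The random version reduces to a direct $L^2$-computation on each $K_j$ using orthogonality of the $\omega(p)$. For the arithmetic version I would write the difference via a Mellin identity of the form
\[
\te_{m}(s + i\tau) - \te_{m, X}(s+i\tau) = \frac{1}{2\pi i} \int_{(c)} \te_m(s + i\tau + z)\, \widehat{v}_X(z)\, dz - (\text{polar contribution}),
\]
shift the contour past $z = 0$ into $\Re z < 0$, and bound the resulting line integral by a second moment estimate for $\te_m$ on vertical lines inside $\mathcal{R}$; this is precisely Inoue's mean value theorem \cite[Theorem~5]{I2019}. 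The very definition of $\mathcal{I}_{\mathcal{K}}(T)$ guarantees that the shifted arguments $s + i\tau + z$ avoid the singular set of $\te_m$ for all $\tau$ in the average.

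Finally, combining these three ingredients via the standard weak-convergence criterion used in this setting (as in \cite[Ch.~VII]{KV1992}) yields $\mathcal{Q}_T \Longrightarrow \mathcal{Q}$, once one also verifies tightness of $\{\mathcal{Q}_T\}$; this falls out of the same mean-square bound combined with Cauchy's integral formula to pass from sup-norms on a slightly larger compact set to those on each $K_j$. The main obstacle is the arithmetic approximation in the third step: the second moment of $\te_m$ must be controlled uniformly in $T$ over the averaging interval, even though a positive-measure subset of shifts must be excluded to avoid non-holomorphicity, and it is precisely the combination of the zero-density estimate (used to show $\meas(\mathcal{I}_{\mathcal{K}}(T)) \sim T$) with Inoue's mean value theorem that makes this possible.
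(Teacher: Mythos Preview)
Your overall Bagchi-type scheme is correct, and the equidistribution step as well as the random-side approximation $\EXP^{\mathbf{m}}[d(\te_m(\cdot,\omega),\te_{m,X}(\cdot,\omega))]\to 0$ match what the paper does; the latter is indeed carried out by a Mellin contour shift, which is legitimate there because $\te_m(s,\omega)$ is almost surely holomorphic on the whole half-plane $\Re(s)>\sigma_0(\mathcal{K})$. (The paper concludes weak convergence via the bounded-Lipschitz criterion, so no separate tightness argument is needed.)

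The genuine gap is in your arithmetic approximation. The identity $\te_{m,X}(s+i\tau)=\frac{1}{2\pi i}\int_{(c)}\te_m(s+i\tau+z)\,\widehat{v}_X(z)\,dz$ is valid for $c$ large, but shifting to $\Re z=-\delta<0$ requires $z\mapsto\te_m(s+i\tau+z)$ to be holomorphic on the entire strip $-\delta\le\Re z\le c$, i.e.\ for \emph{all} $\Im z\in\mathbb{R}$. That fails: as $\Im z$ varies over $\mathbb{R}$, the point $s+i\tau+z$ sweeps a full vertical line in the critical strip and inevitably meets branch cuts coming from zeros of $\zeta$, regardless of $\tau$. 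Your claim that ``the very definition of $\mathcal{I}_{\mathcal{K}}(T)$ guarantees that the shifted arguments $s+i\tau+z$ avoid the singular set'' is therefore not correct: membership in $\mathcal{I}_{\mathcal{K}}(T)$ only provides a zero-free window of \emph{fixed} height $|\mathcal{K}|+1$ around $\tau+\tau_0(\mathcal{K})$, which protects the argument for bounded $|\Im z|$ but says nothing about the full line. A second-moment bound for $\te_m$ cannot repair a contour shift that is not even well-defined pointwise in $\tau$.

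The paper avoids this obstruction by not shifting any contour on the arithmetic side. It passes to a subset $\mathscr{X}_{\mathcal{K}}(T)\subset\mathcal{I}_{\mathcal{K}}(T)$ with a \emph{growing} zero-free buffer $Y(T)=(\log T)^{8/(\sigma_0(\mathcal{K})-1/2)}$, still of full density by the zero-density estimate. For $\tau\in\mathscr{X}_{\mathcal{K}}(T)$ a Granville--Soundararajan-type approximation (Lemma~\ref{lem:GS}) gives directly
\[
\te_m(s+i\tau)=\sum_{2\le n\le Y(T)/2}\frac{\Lambda(n)}{n^{s+i\tau}(\log n)^{m+1}}+O\bigl((\log T)^{-1}\bigr),
\]
and the remaining difference between this finite Dirichlet polynomial and $\te_{m,X}(s+i\tau)$ is handled by an elementary mean-square computation for Dirichlet polynomials. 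No Mellin shift for $\te_m$, and no second-moment input for $\te_m$ itself, is used.
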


\begin{proposition}\label{prop:SP}%Support of Probability measure
The support of the probability measure $\mathcal{Q}$ on $(\mathcal{H}(\mathcal{R}), \mathcal{B}(\mathcal{H}(\mathcal{R})))$ coincides with $\mathcal{H}(\mathcal{R})$.
\end{proposition}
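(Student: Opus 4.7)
The plan is to adapt the classical Bagchi--Voronin computation of the support of a random Dirichlet series, with minor cosmetic changes to accommodate the weight $(\log p)^{-m}$ coming from the iterated integrals. I would first decompose
\[
\te_m(s,\omega) = \sum_p X_p(s,\omega), \qquad X_p(s,\omega) = \sum_{k \geq 1} \frac{\omega(p)^k}{p^{ks}(\log p)^{m}},
\]
so that $\{X_p\}_p$ is a family of \emph{independent} $\mathcal{H}(\mathcal{R})$-valued random variables under $\mathbf{m}$, since each $X_p$ depends only on the single coordinate $\omega(p)$. The $k \geq 2$ tail of $X_p$ is $\mathbf{m}$-a.s.\ absolutely uniformly convergent on compacta of $\mathcal{R}$ (because $\Re(s) > \sigma_L > 1/2$ there), while the $k = 1$ contribution $\sum_p \omega(p) p^{-s}(\log p)^{-m}$ is an orthogonal series in $L^2(\mathbf{m})$ whose squared norms $\sum_p p^{-2\sigma_L}(\log p)^{-2m}$ are summable, so standard It\^o--Nisio/Kolmogorov two-series reasoning yields $\mathbf{m}$-a.s.\ convergence in $\mathcal{H}(\mathcal{R})$.

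Next I would invoke the general support formula for an a.s.\ convergent sum of independent Banach-space-valued random variables:
\[
\supp \mathcal{Q} = \overline{\Bigl\{ \sum_p g_p : g_p \in \supp X_p,\ \text{the series converges in } \mathcal{H}(\mathcal{R})\Bigr\}}.
\]
Because $\omega(p)$ is uniformly distributed on $\gamma_p$, $\supp X_p$ equals the closed arc $\{s \mapsto \sum_{k \geq 1} a^k p^{-ks}(\log p)^{-m} : |a|=1\}$. Absorbing the $k \geq 2$ contributions into a continuous map from $(a_p)_p$ into $\mathcal{H}(\mathcal{R})$, the proposition reduces to showing that
\[
\mathcal{E} := \Bigl\{ \sum_p \frac{a_p}{p^s (\log p)^m} : |a_p| = 1,\ \text{the series converges in }\mathcal{H}(\mathcal{R})\Bigr\}
\]
is dense in $\mathcal{H}(\mathcal{R})$.

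Density of $\mathcal{E}$ I would obtain by Hahn--Banach. Any continuous linear functional on $\mathcal{H}(\mathcal{R})$ is represented by integration against a compactly supported complex Borel measure $\mu$ on $\mathcal{R}$. Arguing as in Bagchi's lemma, the vanishing of such a functional on $\overline{\mathcal{E}}$ combined with the freedom to choose signs reduces to the inequality
\[
\sum_p \frac{|\hat\mu(\log p)|}{(\log p)^m} < \infty, \qquad \hat\mu(z) := \int_{\mathcal{R}} e^{-sz}\, d\mu(s),
\]
and one has to show that this cannot happen unless $\mu$ already annihilates every element of $\mathcal{H}(\mathcal{R})$.

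The main obstacle is precisely this last divergence statement. Here $\hat\mu$ is an entire function of exponential type at most $\sigma_R$. The strategy is a Bernstein-type argument: exponential-type bounds on $\hat\mu$, together with the prime number theorem (which furnishes enough density of $\{\log p\}_p$ on the positive real axis and the divergence of $\sum_p p^{-\sigma}(\log p)^{-m}$ for every $\sigma < 1$), force $\hat\mu$ to vanish on a right half-plane whenever the displayed series is convergent, hence $\hat\mu \equiv 0$, hence $\mu$ kills all of $\mathcal{H}(\mathcal{R})$. The weight $(\log p)^{-m}$ only damps the summand slightly and does not affect the argument qualitatively, since the relevant growth of $\hat\mu$ along the log-primes is polynomial. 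Combined with Proposition~\ref{prop:LT}, this support identification yields Theorem~\ref{thm:MTH} via the usual portmanteau argument applied to an $\epsilon$-ball around $f$ in $\mathcal{H}(\mathcal{R})$.
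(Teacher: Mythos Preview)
Your outline matches the paper's proof: decompose $\te_m(s,\omega)$ as an a.s.\ convergent sum of independent $\mathcal{H}(\mathcal{R})$-valued variables indexed by primes, invoke the support formula for such sums (the paper's Lemma~\ref{lem:ASCS}), strip off the absolutely convergent $k\ge 2$ tail to reduce to density of $\mathcal{E}$ (the paper's Lemma~\ref{lem:DL}), and establish that density via the Bernstein/PNT argument on the Laplace transform $\hat\mu$ (the paper's Lemma~\ref{lem:KDL}, using Lemmas~\ref{lem:ET} and~\ref{lem:BT}). Two small corrections: your summand in $X_p$ is missing the factor $k^{-(m+1)}$, so that in fact $X_p(s,\omega)=\Li_{m+1}(\omega(p)p^{-s})/(\log p)^m$ as in the paper (harmless, since the $k\ge 2$ tail is absolutely convergent either way); and the passage you label ``Hahn--Banach'' is not a straight separation argument---$\mathcal{E}$ is not a linear subspace---but rather the density criterion the paper quotes as Lemma~\ref{lem:DLH}, whose hypothesis is precisely the implication ``$\sum_p |\hat\mu(\log p)|/(\log p)^m<\infty\ \Rightarrow\ \int s^r\,d\mu=0$ for all $r$'' that you then correctly verify.
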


Throughout the paper, 
we use the following convention; 
For a probability space $(X, \mathcal{F}, \mu)$ and an integrable function $f$ on $X$, 
we write $\mathbb{E}^{\mu} \left[ f \right] = \int_{X} f d\mu$.

\subsection{Proof of Proposition \ref{prop:LT}}
We will prove the proposition by using the modern method as in \cite{K2015} and \cite{K2017}.
Although the basic strategy of the proof is based on the method of Bagchi in \cite{B1981}, 
we note that one need not to use the Birkhoff-Khinchin ergodic theorem in this way.

To show the proposition, we use the smoothing technique in analytic number therory.
We fix a real valued smooth function $\phi(x)$ on $[0,\infty)$ with compact support satisfying $\phi(x) = 1$ on $[0,1]$ and $0 \leq \phi(x) \leq 1$ for all $x \in \mathbb{R}$.
Let $\hat{\phi} (s)$ denote the Mellin transform of $\phi(x)$, i.e. $\hat{\phi} (s)= \int_{0}^\infty \phi(x) x^{s -1} dx$ for $\Re(s)>0$.
We recall the basic properties of the Mellin transform.

\begin{lemma}\label{lem:MT}%Mellin Transform
We have the following;
\begin{itemize}
\item[(i)] The Mellin transform $\hat{\phi} (s)$ has the meromorphic continuation on  $\Re(s) > -1$ with at most a simple pole at $s =0$ with residue $\phi(0) = 1$.
\item[(ii)] Let $-1< A < B$ be real numbers.
Then for any positive interger $N>0$, there exists $C(A,B;N) > 0$ such that 
\[
\left| \hat{\phi} (s) \right| 
\leq C(A,B;N) ( 1 + |t| )^{-N}
\]
holds for $s = \sigma + it$ with $A \leq \sigma \leq B $ and $|t|\geq 1$.
\item[(iii)] For any $c >0$ and $x > 0$, we have the Mellin inversion formula
\[
\phi(x) = \frac{1}{2 \pi i} \int_{c - i \infty}^{c + \infty} \hat{\phi} (s) x^{- s}ds.
\]
\end{itemize}
\end{lemma}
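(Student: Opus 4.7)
The three assertions of Lemma \ref{lem:MT} are all standard facts about the Mellin transform of a Schwartz-type function, and the plan is to treat them in sequence, each by the obvious device suggested by their statements.

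For part (i), I would split $\hat{\phi}(s) = \int_0^1 \phi(x) x^{s-1}\,dx + \int_1^\infty \phi(x) x^{s-1}\,dx$. The tail integral is entire in $s$ because $\phi$ has compact support, so it reduces to a finite-interval integral with a smooth bounded integrand. For the piece on $(0,1]$, write $\phi(x) = \phi(0) + (\phi(x)-\phi(0))$; since $\phi$ is smooth, $\phi(x)-\phi(0) = O(x)$ near $0$, so $\int_0^1 (\phi(x)-\phi(0)) x^{s-1}\,dx$ converges and is holomorphic on $\Re(s) > -1$. The remaining piece is $\phi(0)\int_0^1 x^{s-1}\,dx = \phi(0)/s$ for $\Re(s) > 0$, which gives the meromorphic continuation with a simple pole at $s=0$ of residue $\phi(0) = 1$.

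For part (ii), I would use repeated integration by parts. Starting from $\hat\phi(s) = \int_0^\infty \phi(x) x^{s-1}\,dx$ valid for $\Re(s) > 0$, integration by parts $N$ times yields
\[
\hat\phi(s) = \frac{(-1)^N}{s(s+1)\cdots (s+N-1)} \int_0^\infty \phi^{(N)}(x)\, x^{s+N-1}\,dx,
\]
where the boundary terms vanish because $\phi$ is compactly supported and, on the lower boundary, the factors of $x$ produce enough vanishing. The right side is holomorphic for $\Re(s) > -N$ away from the poles $s = 0, -1, \ldots, -(N-1)$, so by uniqueness it equals the meromorphic continuation from part (i). Since $-1 < A \leq \Re(s) \leq B$ and $|t| \geq 1$, we have $|s+k| \geq |t|$ for $0 \leq k \leq N-1$, hence the denominator is $\gg |t|^N \gg (1+|t|)^N$; and the integral is uniformly bounded in this strip because $\phi^{(N)}$ is bounded with compact support. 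Combining yields the claimed decay.

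For part (iii), since $\phi$ is smooth and compactly supported, the function $\psi(u) := \phi(e^{-u})$ is Schwartz-class on $\mathbb{R}$ (rapidly decaying as $u \to +\infty$ because $\phi$ vanishes near $0$ only asymptotically, and compactly supported as $u \to -\infty$ because $\phi$ has compact support); then $\hat\phi(c + it) = \int_{-\infty}^\infty \psi(u) e^{-(c+it)u}\cdot e^{u}\cdot e^{-u}\,du$ — more cleanly, the substitution $x = e^{-u}$ identifies the Mellin transform with a Fourier transform, and the inversion formula becomes an instance of Fourier inversion, which is justified by the rapid decay from (ii). I expect the main (minor) obstacle to be bookkeeping for part (ii): one must verify that the boundary terms in the iterated integration by parts really vanish and that the resulting formula provides the correct analytic continuation, but neither step is essentially difficult given the smoothness and compact support of $\phi$.
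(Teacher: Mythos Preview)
The paper does not actually prove this lemma; it simply cites \cite[Appendix A]{K2017}. Your sketch is a correct and standard direct argument, so you are supplying more than the paper does.

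One small slip in part (iii): the function $\psi(u) = \phi(e^{-u})$ is \emph{not} Schwartz, because $\phi(0)=1$ forces $\psi(u)\to 1$ as $u\to +\infty$. What is true is that for any fixed $c>0$ the function $u\mapsto \phi(e^{-u})e^{-cu}$ is Schwartz (exponential decay as $u\to+\infty$, compact support as $u\to -\infty$, smoothness throughout), and it is this function whose Fourier transform in $t$ equals $\hat\phi(c+it)$. Your fallback remark---that the rapid decay established in (ii) already makes $t\mapsto \hat\phi(c+it)$ absolutely integrable, so Fourier/Mellin inversion is justified on the line $\Re(s)=c$---is the clean way to finish, and it does not require the incorrect Schwartz claim about $\psi$ itself.
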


\begin{proof}
The proof can be found in \cite[Appendix A]{K2017}.
\end{proof}

We use the following lemmas which are a little modified versions of Lemma 2.1 and Lemma 2.2 in a paper of Granville and Soundararajan \cite{GS2006}.

\begin{lemma}\label{lem:FGS}%Fundamental lemma of Granville-Soundararajan
Let $y \geq 2$ and $|t|\geq y + 3$ be real numbers.
Let $1/2 \leq \sigma_0 < 1$ and suppose that the rectangle $\{z ~;~ \sigma_0 < \Re(s) \leq 1,\, | \Im(z) - t | \leq y + 2 \}$ is free of zeros of $\zeta(z)$. 
Then for $\sigma_0 < \sigma \leq 1$ we have
\[
\te_m( \sigma + it ) = \sum_{2 \leq n \leq y} \frac{\Lambda(n)}{n^{\sigma + it} ( \log n )^{m +1}} + O_m\left( \frac{\log |t|}{(\sigma_1 - \sigma_0)^2} y^{\sigma_1 - \sigma} \right),
\]
where we put $\sigma_1 = \min\left\{ \sigma_0 + 1/\log y, (\sigma + \sigma_0)/2 \right\}$. 
\end{lemma}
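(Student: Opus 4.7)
The plan is to adapt the Mellin-transform/contour-shift argument of Granville and Soundararajan \cite{GS2006} to the iterated integral $\te_m$. For $\Re(s) > 1$, termwise integration of the Dirichlet series for $\log \zeta$ gives the absolutely convergent expansion $\te_m(s) = \sum_{n \geq 2} \Lambda(n) n^{-s} (\log n)^{-(m+1)}$. Applying the Mellin inversion formula (Lemma~\ref{lem:MT}(iii)) to write $\phi(n/y)$ as a contour integral and exchanging sum and integral (legitimate for $\Re(s+w) > 1$ on the integration contour) yields, for any sufficiently large $c$,
\[
\sum_{n \geq 2} \frac{\Lambda(n)\phi(n/y)}{n^s (\log n)^{m+1}}
= \frac{1}{2\pi i} \int_{(c)} \hat{\phi}(w)\, \te_m(s+w)\, y^w\, dw.
\]

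The next step is to move the line of integration to $\Re(w) = \sigma_1 - \sigma < 0$, after first truncating to $|\Im w| \leq y+2$. Under the zero-free-rectangle hypothesis, $\te_m(s+w)$ extends holomorphically to the closed rectangle $\{\sigma_1 - \sigma \leq \Re(w) \leq c,\ |\Im w| \leq y+2\}$, so the only singularity crossed is the simple pole of $\hat{\phi}$ at $w = 0$ (Lemma~\ref{lem:MT}(i)), whose residue $\phi(0) = 1$ contributes exactly $\te_m(s)$. The horizontal segments at $|\Im w| = y + 2$ and the tail of the original integral beyond $|\Im w| > y+2$ are negligible, by the superpolynomial decay $|\hat{\phi}(w)| \ll_N (1+|\Im w|)^{-N}$ from Lemma~\ref{lem:MT}(ii). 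Rearranging produces
\[
\te_m(s) = \sum_{n \geq 2} \frac{\Lambda(n)\phi(n/y)}{n^s (\log n)^{m+1}} - \frac{1}{2\pi i} \int_{(\sigma_1 - \sigma)} \hat{\phi}(w)\, \te_m(s+w)\, y^w\, dw,
\]
and I would then compare the smoothed sum with the sharp truncation $\sum_{2 \leq n \leq y}$; the difference is a short sum over $n \in (y, Ay]$ with $\supp \phi \subset [0, A]$, which is absorbed into the error by a Chebyshev-type estimate together with a suitable choice of $\phi$ concentrated near $[0,1]$.

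The shifted integral is then estimated by pulling out the factor $|y^w| = y^{\sigma_1 - \sigma}$, using the $L^1$-decay of $\hat{\phi}$ from Lemma~\ref{lem:MT}(ii), and bounding $|\te_m(s+w)|$ uniformly on the new contour. This last bound relies on a classical estimate of the form $|\log \zeta(z)| \ll \log|\Im z|/(\Re z - \sigma_0)$ valid inside the zero-free rectangle (obtained via the Borel--Carath\'eodory theorem applied to $\log \zeta$ on a suitable disk centered in the rectangle), combined with the iterated-integral definition of $\te_m$, splitting the $\alpha$-integration at $\Re \alpha = 2$ to exploit the absolutely convergent Dirichlet series for large real part. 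Combining these ingredients gives the stated error $O_m(\log|t|\cdot (\sigma_1 - \sigma_0)^{-2} y^{\sigma_1 - \sigma})$. The principal technical difficulty is tracking the $(\sigma_1 - \sigma_0)$-dependence through the $m$-fold integration so that one obtains exactly two inverse powers of $\sigma_1 - \sigma_0$ (one from the pointwise singularity of $\log \zeta$ near the forbidden zero-line, one from integration along the vertical contour), while keeping the $m$-dependence inside the implicit constant of the $O_m$.
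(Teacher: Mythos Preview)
The paper's own proof is a two-line reference: the case $m=0$ is Lemma~2.1 of Granville--Soundararajan \cite{GS2006}, and the general case is asserted to follow by the same argument. That argument, however, is \emph{not} the smooth-Mellin approach you outline; Granville and Soundararajan use Perron's formula with the sharp kernel $y^{w}/w$, so that the truncated sum $\sum_{2\le n\le y}$ appears directly on the Dirichlet-series side and no smooth-to-sharp comparison is ever needed.

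Your proposal has a genuine gap precisely at that comparison step. With any fixed compactly supported $\phi$ equal to $1$ on $[0,1]$ and $\supp\phi\subset[0,A]$, the discrepancy between the smoothed sum and the sharp truncation is
\[
\sum_{y<n\le Ay}\frac{\Lambda(n)\,\phi(n/y)}{n^{\sigma+it}(\log n)^{m+1}},
\]
and a Chebyshev bound gives this size $\asymp_{A,m} y^{1-\sigma}/(\log y)^{m+1}$. The error you are aiming for is $O_m\bigl((\log|t|)(\sigma_1-\sigma_0)^{-2}y^{\sigma_1-\sigma}\bigr)$ with $\sigma_1\le\sigma_0+1/\log y<1$; in the admissible regime $|t|\asymp y$ and $\sigma-\sigma_0$ bounded below, this is at most of order $(\log y)^{3}\,y^{\sigma_0-\sigma}$. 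The ratio of the tail to the target error is therefore $\gg y^{1-\sigma_0}/(\log y)^{m+4}\to\infty$, so the tail cannot be absorbed. Letting $\phi=\phi_y$ be supported in $[0,1+\delta(y)]$ with $\delta(y)\to 0$ would shrink the tail, but then the uniform decay bounds of Lemma~\ref{lem:MT}(ii) for $\hat\phi$ degrade (the implied constants blow up with $1/\delta$), which undermines the truncation and horizontal-segment estimates you rely on. The clean fix is to follow \cite{GS2006} and start from Perron's formula with the kernel $y^{w}/w$; after that, the remainder of your sketch---contour shift to $\Re w=\sigma_1-\sigma$, residue at $w=0$, Borel--Carath\'eodory bound for $\log\zeta$ in the zero-free rectangle, and integration in $\sigma$ to pass from $\log\zeta$ to $\te_m$---goes through essentially as you describe.
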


\begin{proof}
For the case $m =0$, the statement has been proved in Lemma 2.1 in \cite{GS2006}.
For the other cases, the proof can be done in the same way.
\end{proof}

\begin{lemma}\label{lem:GS}%Granville-Soundararajan
Let $1/2 < \sigma_0 \leq 1$ be fixed and let $T$ and $y$ be large numbers with $T \geq y + 3$.
Put 
\begin{align*}
\bm{\ell}\left([ (1/2)T, (5/2) T ];y\right) 
=& \left( \bigcup_{\substack{\rho = \beta + i \gamma;\\
\beta > (1/2)( 1/2 + \sigma_0 ),\\
\gamma \in [ (1/2)T, (5/2) T ]}} \left( \gamma - (y + 3),  \gamma + ( y +3 )\right)\right) \\
& \quad \cup [(1/2)T, (1/2)T + (y + 3) ] 
\cup [ (5/2)T - (y + 3), (5/2)T]. 
\end{align*}
Then we have
\[
\te_m( \sigma + i t ) 
= \sum_{2 \leq n \leq y} \frac{\Lambda(n)}{n^{\sigma + it} (\log n)^{m +1} } 
+ O_{m}( y^{(1/2 - \sigma_0)/2} (\log T)^3 )
\]
for $\sigma_0 \leq \sigma \leq 1$ and $t \in [ (1/2)T, (5/2) T ] \setminus \bm{\ell}\left([ (1/2)T, (5/2) T ];y\right)$, and the estimate
\[
\meas \left( \bm{\ell}\left([ (1/2)T, (5/2) T ];y\right) \right) \ll T^{5/4 - \sigma_0/2 } y (\log T)^5
\]
holds.
\end{lemma}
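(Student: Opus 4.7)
My plan is to derive Lemma \ref{lem:GS} by applying Lemma \ref{lem:FGS} at the level $\sigma_* := \tfrac{1}{2}\bigl(\tfrac{1}{2}+\sigma_0\bigr) = \tfrac{1}{4}+\tfrac{\sigma_0}{2}$, and then controlling the exceptional set $\bm{\ell}$ by a zero-density estimate for $\zeta(s)$.

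For the approximation, fix $t \in [T/2, 5T/2] \setminus \bm{\ell}([T/2, 5T/2]; y)$. Removing the two endpoint intervals from $[T/2, 5T/2]$ forces $|t| \geq y+3$ (since $T \geq y+3$), and removing the intervals around each zero with $\beta > \sigma_*$ forces the rectangle $\{z : \sigma_* < \Re(z) \leq 1,\,|\Im(z)-t| \leq y+2\}$ to contain no zeros of $\zeta$; these are precisely the hypotheses of Lemma \ref{lem:FGS} (with its parameter $\sigma_0$ replaced by $\sigma_*$). For the resulting error, since $\sigma \geq \sigma_0$ gives $\sigma - \sigma_* \geq (\sigma_0 - 1/2)/2 > 0$, for $y$ larger than a constant depending only on $\sigma_0$ the minimum defining $\sigma_1$ is attained at $\sigma_* + 1/\log y$, so $(\sigma_1 - \sigma_*)^{-2} = (\log y)^2$ and
\[
y^{\sigma_1 - \sigma} \leq y^{\sigma_* + 1/\log y - \sigma_0} \leq e\,y^{(1/2-\sigma_0)/2}.
\]
Combined with $\log|t| \ll \log T$, this reduces the error term of Lemma \ref{lem:FGS} to $O_m\bigl(y^{(1/2-\sigma_0)/2}(\log T)^3\bigr)$, as required.

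For the measure bound, each non-trivial zero $\rho = \beta + i\gamma$ with $\beta > \sigma_*$ and $\gamma \in [T/2, 5T/2]$ contributes an interval of length $2(y+3)$ to $\bm{\ell}$, and the two endpoint intervals add a further $2(y+3)$ in total. Writing $N(\alpha, U)$ for the number of zeros of $\zeta$ with real part greater than $\alpha$ and imaginary part in $(0, U]$, a union bound yields
\[
\meas(\bm{\ell}) \ll y\bigl(N(\sigma_*, 3T) + 1\bigr).
\]
Invoking a standard zero-density estimate---for example Ingham's $N(\alpha, U) \ll U^{3(1-\alpha)/(2-\alpha)}(\log U)^{4}$---at $\alpha = \sigma_* = 1/4 + \sigma_0/2$ comfortably produces $N(\sigma_*, 3T) \ll T^{5/4 - \sigma_0/2}(\log T)^4$, and hence the claimed $\meas(\bm{\ell}) \ll y\,T^{5/4 - \sigma_0/2}(\log T)^5$.

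The main non-routine ingredient is choosing a zero-density estimate strong enough to deliver the exponent $5/4 - \sigma_0/2$ at $\alpha = 1/4 + \sigma_0/2$; once that is in hand, everything else is a careful reading of the error term in Lemma \ref{lem:FGS} combined with a union bound over the relevant zeros.
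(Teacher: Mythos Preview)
Your proof is correct and follows essentially the same route as the paper: apply Lemma~\ref{lem:FGS} with its parameter shifted to $\sigma_*=\tfrac12(\tfrac12+\sigma_0)$ for the approximation, and bound the exceptional set via a zero-density estimate. The only cosmetic difference is that the paper quotes the estimate $N(\sigma_*,T)\ll T^{3/2-\sigma_*}(\log T)^5$ from Titchmarsh (which gives the exponent $5/4-\sigma_0/2$ directly), whereas you invoke Ingham's $N(\alpha,U)\ll U^{3(1-\alpha)/(2-\alpha)}(\log U)^{O(1)}$, which yields an even smaller $T$-exponent for $\sigma_0>1/2$; either choice suffices.
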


\begin{proof}
The first assertion can be obtained by applying Lemma \ref{lem:FGS} with $\sigma_0'=1/2(1/2 + \sigma_0)$ in place of $\sigma_0$. 
The last assertion follows from the zero-density estimate $N(\sigma_0', T) \ll T^{3/2 - \sigma_0'}(\log T)^5$ (see e.g. \cite[Theorem 9.19 A]{T1986}). 
\end{proof}

We will use Lemma \ref{lem:GS} with $\sigma_0 = \sigma_0(\mathcal{K})$ in the next lemma. 
In what follows, let 
\[
\mathscr{X}_{\mathcal{K}} ( T ) 
= \mathcal{G}_{\sigma_0(\mathcal{K}),\left| \mathcal{K} \right|+Y(T) + 4}  \cap [T, 2T] \quad \textrm{and} \quad Y(T) = (\log T)^{8/(\sigma_0(\mathcal{K}) - 1/2 )}
\]
and define the function
\[
\te_{m,X}(s + i \tau)
:= \sum_{n=2}^\infty \frac{\Lambda(n) \phi ( n / X ) }{n^{s + i \tau} (\log n)^{m + 1} }
= \sum_{2 \leq n \leq X} \frac{\Lambda(n) }{n^{s + i \tau} (\log n)^{m + 1} }
+ R_{m,X}(s + it)
\]
for $X \geq 2$.
Note that $\mathscr{X}_{\mathcal{K}} ( T ) \subset I_\mathcal{K}(T)$ holds, and $\meas\left(\mathscr{X}_{\mathcal{K}} ( T )\right) \sim T$ holds as $T \rightarrow \infty$ by the zero-density estimate.
Then we have the following lemma.

\begin{lemma}\label{lem:LT1}%Limit Theorem 1
For any compact subset $C$ of $\mathcal{R}$, we have
\begin{align*}
\lim_{X \rightarrow \infty} \limsup_{T \rightarrow \infty} \frac{1}{\meas\left( \mathcal{I}_{\mathcal{K}}(T) \right)}
\int_{\mathscr{X}_{\mathcal{K}} ( T )}  \sup_{s \in C} \left| \te_m(s + i \tau) - \te_{m,X}(s + i \tau) \right| d \tau 
= 0.
\end{align*}
\end{lemma}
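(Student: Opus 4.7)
The plan is to combine the Mellin-inversion smoothing with a contour shift, reducing the estimate to a uniform mean-value bound for $|\te_m|$ on a vertical line in the strip $1/2<\Re<1$, and then to pass from the pointwise estimate to the supremum over $s\in C$ by Cauchy's integral formula.

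By Lemma \ref{lem:MT}(iii) applied termwise, for $c$ so large that $\Re(s+z)>1$,
\[
\te_{m,X}(s+i\tau)=\frac{1}{2\pi i}\int_{(c)}\hat\phi(z)\,X^{z}\,\te_m(s+i\tau+z)\,dz.
\]
I would then shift the contour to $\Re(z)=-\eta$ for a small fixed $\eta>0$ chosen so that $\Re(s)-\eta>\sigma_0(\mathcal{K})$ for every $s$ in a compact neighborhood of $C$. For $\tau\in\mathscr{X}_{\mathcal{K}}(T)$, the definition of $\mathscr{X}_{\mathcal{K}}(T)$ forces every singularity of $z\mapsto\te_m(s+i\tau+z)$ inside the strip $-\eta\le\Re(z)\le c$ to have $|\Im(z)|\ge Y(T)+3$, so the shift is legitimate inside $|\Im(z)|\le Y(T)+2$, and the closing horizontal segments contribute negligibly via the rapid decay of $\hat\phi$ from Lemma \ref{lem:MT}(ii). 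The residue at the simple pole $z=0$ contributes $\te_m(s+i\tau)$, yielding
\[
\te_m(s+i\tau)-\te_{m,X}(s+i\tau)=-\frac{1}{2\pi i}\int_{(-\eta)}\hat\phi(z)\,X^{z}\,\te_m(s+i\tau+z)\,dz.
\]

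To remove the supremum over $s\in C$, I enlarge $C$ to a compact set $C'\subset\mathcal{R}$ whose boundary lies at distance $\delta>0$ from $C$, so that by Cauchy's integral formula
\[
\sup_{s\in C}|\te_m(s+i\tau)-\te_{m,X}(s+i\tau)|\le\frac{1}{2\pi\delta}\oint_{\partial C'}|\te_m(w+i\tau)-\te_{m,X}(w+i\tau)|\,|dw|.
\]
Substituting the contour representation and applying Fubini, the lemma reduces to the uniform mean-value bound
\[
\int_{\mathscr{X}_{\mathcal{K}}(T)}|\te_m(w-\eta+ib+i\tau)|\,d\tau\ll T
\]
for $w\in\partial C'$ and $b\in\mathbb{R}$, the $b$-integration being absorbed by the rapid decay of $\hat\phi(-\eta+ib)$.

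The main obstacle is therefore this mean-value (equivalently, second-moment) bound for $\te_m$ on the vertical line $\Re=\Re(w)-\eta\in(1/2,1)$. I would derive it from Lemma \ref{lem:GS} applied with some $\sigma_0\in(1/2,\Re(w)-\eta)$: outside the exceptional set $\bm{\ell}\subset[T/2,5T/2]$ of measure $o(T)$, $\te_m$ is approximated by the Dirichlet polynomial $\sum_{2\le n\le y}\Lambda(n)n^{-(\cdot)}(\log n)^{-m-1}$, whose mean square over $[T,2T]$ is $\ll T\sum_n\Lambda(n)^{2}n^{-2(\Re(w)-\eta)}(\log n)^{-2m-2}=O(T)$ by the diagonal term in the second moment, and hence its $L^{1}$ norm on $[T,2T]$ is $\ll T$ by Cauchy--Schwarz. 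Taking $y=T^{\sigma_0/2-1/4-\varepsilon}$ keeps both the approximation error and $\meas(\bm{\ell})$ admissibly small. The delicate point is controlling the integral over the exceptional set $\bm{\ell}$ itself, where $\te_m$ may be close to a zero of $\zeta$; I would handle it using the local behaviour of $\te_m$ near such zeros (a mild logarithmic singularity for $m=0$, boundedness for $m\ge1$) combined with the zero-density estimate. With this bound in place, dividing by $\meas(\mathcal{I}_{\mathcal{K}}(T))\sim T$ and letting first $T\to\infty$ and then $X\to\infty$ produces the decay factor $X^{-\eta}\to0$ and concludes the proof.
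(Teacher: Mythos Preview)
Your Mellin-inversion/contour-shift scheme is a genuinely different route from the paper's proof, and it is essentially the device the paper employs for the \emph{random} analogue (Lemma~\ref{lem:LT2}): there the clean growth bound $|\te_m(s,\omega)|\ll |t|+2$ from Lemma~\ref{lem:KE} makes the shifted integral immediate. The paper's own proof of Lemma~\ref{lem:LT1}, by contrast, avoids Mellin inversion entirely: it applies Cauchy's formula over $\partial\mathcal R$, then uses the triangle inequality
\[
|\te_m-\te_{m,X}|\le\Big|\te_m-\!\!\sum_{2\le n\le Y(T)/2}\!\!\tfrac{\Lambda(n)}{n^{s}(\log n)^{m+1}}\Big|+\Big|\!\!\sum_{2\le n\le Y(T)/2}\!\!\tfrac{\Lambda(n)}{n^{s}(\log n)^{m+1}}-\te_{m,X}\Big|,
\]
bounding the first piece pointwise via Lemma~\ref{lem:GS} and the second in $L^2$ by a direct Dirichlet-polynomial second moment. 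This yields the tail $(\sum_{n>X}\Lambda(n)^2 n^{-2\sigma_0(\mathcal K)})^{1/2}$ rather than your $X^{-\eta}$; both go to $0$, so either decomposition works.

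Your execution, however, mishandles the mean-value step. The integration is over $\tau\in\mathscr{X}_{\mathcal K}(T)$, and by definition this set only guarantees a zero-free window of width $|\mathcal K|+Y(T)+4$ (polylogarithmic in $T$) around $\tau+\tau_0(\mathcal K)$. After the shift by $b$ with $|b|$ up to the truncation height, the point $w-\eta+i(b+\tau)$ inherits a zero-free rectangle of height at most $\asymp Y(T)$. Hence Lemma~\ref{lem:FGS} can be invoked only with $y\asymp Y(T)$, \emph{not} with $y=T^{\sigma_0/2-1/4-\epsilon}$: the latter would require a zero-free rectangle of height $\asymp T^{\epsilon}$, which $\mathscr{X}_{\mathcal K}(T)$ does not supply. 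Once you take $y\asymp Y(T)$, there is no residual exceptional set $\bm\ell$ inside $\mathscr{X}_{\mathcal K}(T)$ at all, so your ``delicate point'' about the integral over $\bm\ell$ and the local behaviour near zeros disappears; the $L^1$-mean of the length-$Y(T)$ Dirichlet polynomial is $\ll T$ by Cauchy--Schwarz and a standard diagonal/off-diagonal count, and the FGS error term is $\ll Y(T)^{-c}\log T=o(1)$. With that correction your argument goes through and gives the factor $X^{-\eta}$.
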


\begin{proof}[Proof of Lemma \ref{lem:LT1}]
Let $T$ be large number.
We note that, for any $\tau \in \mathscr{X}_{\mathcal{K}}( T )$, 
the function $\te_m(s + i\tau)$ is holomorphic on $\mathcal{R}$. 
By Cauchy's formula, 
\[
\te_m(s + i \tau) - \te_{m,X}(s + i \tau)
= \frac{1}{2 \pi i} \int_{\partial \mathcal{R}} \frac{\te_m(z + i \tau) - \te_{m,X}(z + i \tau)}{z -s} dz
\]
holds for $s \in C$ and $\tau \in \mathscr{X}_{\mathcal{K}}( T )$.
Here $\partial A$ denotes the boundary of the set $A$.
Hence we have
\begin{align*}
&\frac{1}{\meas\left( \mathcal{I}_{\mathcal{K}}(T) \right)}
\int_{\mathscr{X}_{\mathcal{K}} ( T ) }  \sup_{s \in C} \left| \te_m(s + i \tau) - \te_{m,X}(s + i \tau) \right| d \tau\\
\ll& \frac{1}{\dist(C, \partial \mathcal{R}) \meas\left( \mathcal{I}_{\mathcal{K}}(T) \right)}
\int_{\partial \mathcal{R}} |dz| \int_{\mathscr{X}_{\mathcal{K}} ( T )} \left| \te_m(z + i \tau) - \te_{m,X}(z + i \tau) \right| d \tau \\
\ll& \frac{\ell(\partial \mathcal{R})}{\dist(C, \partial \mathcal{R}) \meas\left( \mathcal{I}_{\mathcal{K}}(T) \right)}
\sup_{\sigma; s \in \partial \mathcal{R}} \int_{\mathscr{X}'_{\mathcal{K}}(T)} \left | \te_m(\sigma + i t) - \te_{m,X}(\sigma + i t) \right| dt,
\end{align*}
where $\dist(C, \partial \mathcal{R})$ denotes the distance between $C$ and $\partial \mathcal{R}$ and $\ell(\partial \mathcal{R})$ does the length of $\partial \mathcal{R}$, 
and we put 
\[
\mathscr{X}'_{\mathcal{K}}(T)
= [ (1/2)T, (5/2) T ] \setminus \bm{\ell}\left([ (1/2)T, (5/2) T ];Y(T)/2\right).
\]
Now it holds that
\begin{align*}
&\int_{\mathscr{X}'_{\mathcal{K}}(T)}\left | \te_m(\sigma + i t) - \te_{m,X}(\sigma + i t) \right| dt \\
\leq& \int_{\mathscr{X}'_{\mathcal{K}}(T)}\left | \te_m(\sigma + i t) - \sum_{2 \leq n \leq Y(T)/2} \frac{\Lambda(n)}{n^{\sigma + it} (\log n)^{m +1} } \right| dt \\
&+ \int_{\mathscr{X}'_{\mathcal{K}}(T)}\left | \sum_{2 \leq n \leq Y(T)/2} \frac{\Lambda(n)}{n^{\sigma + it} (\log n)^{m +1} } - \te_{m,X}(\sigma + i t) \right| dt.
\end{align*}
As for the first integral, we have
\[
\int_{\mathscr{X}'_{\mathcal{K}}(T)}\left | \te_m(\sigma + i t) - \sum_{2 \leq n \leq Y(T)/2} \frac{\Lambda(n)}{n^{\sigma + it} (\log n)^{m +1} } \right| dt
\ll \frac{ \meas (\mathscr{X}'(T) ) }{\log T}
\]
by Lemma \ref{lem:GS}.
As for the second integral, we have 
\begin{align*}
&\left( \int_{\mathscr{X}'_{\mathcal{K}}(T)}\left | \sum_{2 \leq n \leq Y(T)/2} \frac{\Lambda(n)}{n^{\sigma + it} (\log n)^{m +1} } - \te_{m,X}(\sigma + i t) \right| dt \right)^2 \\
\leq& \meas(\mathscr{X}'_{\mathcal{K}}(T)) \int_{T/2}^{(5/2)T} \left | \sum_{2 \leq n \leq Y(T)/2} \frac{\Lambda(n)}{n^{\sigma + it} (\log n)^{m +1} } - \te_{m,X}(\sigma + i t) \right|^2  dt
\end{align*}
by using the Cauchy-Schwarz inequality. 
The latter mean square value is estimated by
\begin{align*}
&\int_{T/2}^{(5/2)T} \left | \sum_{2 \leq n \leq Y(T)/2} \frac{\Lambda(n)}{n^{\sigma + it} (\log n)^{m +1} } - \te_{m,X}(\sigma + i t) \right|^2  dt \\
\ll & \left( \sum_{n > X}  \frac{\Lambda (n)^2\left ( 1 - \phi(n /X) \right)^2}{n^{2 \sigma_0 (\mathcal{K})} (\log n)^{2 m + 2} } \right) T + E_1, 
\end{align*}
where
\begin{align*}
E_1
\ll& \sum_{X < l < n \leq Y(T)/2} \frac{ \Lambda (l) \Lambda(n) ( 1 - \phi (l/X) ) ( 1 - \phi (n/X) ) }{l^\sigma n^\sigma ( \log l )^{m +1} ( \log n )^{m +1} \log (n/l) } \\
\ll&_{\sigma_0(\mathcal{K})} Y(T)^{2 - 2 \sigma_0(\mathcal{K})} \log (Y(T)). 
\end{align*}
Combining the above estimates with the estimate $\meas\left( \mathcal{I}_{\mathcal{K}}(T) \right) \sim T$, 
we obtain
\begin{align*}
&\limsup_{T \rightarrow \infty} \frac{1}{\meas\left( \mathcal{I}_{\mathcal{K}}(T) \right)}
\int_{\mathscr{X}_{\mathcal{K}}(T) }  \sup_{s \in C} \left| \te_m(s + i \tau) - \te_{m,X}(s + i \tau) \right| d \tau \\
\ll&_{\mathcal{K}, \mathcal{R}, C} \left( \sum_{n > X}  \frac{\Lambda (n)^2}{n^{2 \sigma_0(\mathcal{K})} (\log n)^{2 m + 2} } \right)^{1/2},
\end{align*}
and the right hand side of the above inequality tends to $0$ as $X \rightarrow \infty$ since $2 \sigma_0(\mathcal{K}) >1$.
This completes the proof.
\end{proof}

Next, we define the  $\mathcal{H}(\mathcal{R})$-valued random variables $\te_{m,X}(s,\omega)$ by 
\[
\te_{m,X}(s,\omega)
= \sum_{n=2}^\infty \frac{\Lambda(n) \omega(n) \phi ( n / X ) }{n^{s} (\log n)^{m + 1} }.
\]
We will prove the following lemma.

\begin{lemma}\label{lem:LT2}%Limit Theorem
For any compact subset $C$ of $\mathcal{R}$, we have
\[
\lim_{X \rightarrow \infty } \EXP^{\mathbf{m}} \left[ \sup_{s \in C} \left| \te_m(s, \omega) - \te_{m,X} (s, \omega) \right| \right] 
=0.
\]
\end{lemma}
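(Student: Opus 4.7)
The strategy is to mirror the proof of Lemma \ref{lem:LT1} on the probability side, replacing the zero-density input by the $L^2$-orthogonality of the Haar coordinates on $\Omega$. The key observation is that, since $\omega(p)$ are independent uniform random variables on the unit circle and $\Lambda(n) \neq 0$ only when $n$ is a prime power, one has
$$\EXP^{\mathbf{m}}\bigl[\omega(n)\overline{\omega(n')}\bigr] = \delta_{n,n'}$$
for all prime powers $n, n'$, so these $\omega(n)$ form an orthonormal family in $L^{2}(\Omega,\mathbf{m})$.

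\textbf{Step 1 (almost-sure holomorphy).} First I would check that for $\mathbf{m}$-almost every $\omega$ both $\te_m(\cdot,\omega)$ and $\te_{m,X}(\cdot,\omega)$ belong to $\mathcal{H}(\mathcal{R})$. For $\te_{m,X}$ this is immediate since the defining sum is absolutely convergent. For $\te_m$, the standard theory of random Dirichlet series (treated in the references \cite{K2015, K2017} already cited) yields $\mathbf{m}$-a.s.~convergence uniformly on compact subsets of $\{\Re s > 1/2\}$, using precisely that the variance $\sum_n \Lambda(n)^2/(n^{2\sigma}(\log n)^{2m+2})$ is finite for $\sigma > 1/2$; hence $\te_m(\cdot,\omega)$ is $\mathbf{m}$-a.s.~holomorphic on $\mathcal{R}$.

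\textbf{Step 2 (Cauchy + Fubini).} For such $\omega$, Cauchy's integral formula on the rectangle $\mathcal{R}$ gives, for every $s \in C$,
$$\te_m(s,\omega) - \te_{m,X}(s,\omega) = \frac{1}{2\pi i}\int_{\partial \mathcal{R}} \frac{\te_m(z,\omega) - \te_{m,X}(z,\omega)}{z - s}\,dz,$$
whence $\sup_{s \in C}|\te_m(s,\omega) - \te_{m,X}(s,\omega)| \ll \dist(C, \partial \mathcal{R})^{-1} \int_{\partial \mathcal{R}} |\te_m(z,\omega) - \te_{m,X}(z,\omega)|\,|dz|$. Taking expectation over $\omega$, applying Fubini, and then the Cauchy--Schwarz inequality in $\omega$ pointwise in $z$, I obtain
$$\EXP^{\mathbf{m}}\!\Bigl[\sup_{s \in C}\bigl|\te_m(s,\omega) - \te_{m,X}(s,\omega)\bigr|\Bigr] \ll \frac{\ell(\partial \mathcal{R})}{\dist(C, \partial \mathcal{R})}\, \sup_{z \in \partial \mathcal{R}}\bigl(\EXP^{\mathbf{m}}\bigl[|\te_m(z,\omega) - \te_{m,X}(z,\omega)|^2\bigr]\bigr)^{1/2}.$$

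\textbf{Step 3 (variance computation).} Using the orthogonality from Step 1, for any $z = \sigma + it \in \partial \mathcal{R}$,
$$\EXP^{\mathbf{m}}\bigl[|\te_m(z,\omega) - \te_{m,X}(z,\omega)|^2\bigr] = \sum_{n=2}^{\infty} \frac{\Lambda(n)^2\bigl(1 - \phi(n/X)\bigr)^2}{n^{2\sigma}(\log n)^{2m+2}}.$$
Since $1 - \phi(n/X) = 0$ for $n \leq X$ and $\Re z \geq \sigma_L > 1/2$ uniformly on $\partial \mathcal{R}$, this is dominated by $\sum_{n > X} \Lambda(n)^2/(n^{2\sigma_L}(\log n)^{2m+2})$, which tends to $0$ as $X \to \infty$. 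Combining with Step 2 completes the proof. The main (and only slightly delicate) point is Step 1: ensuring that $\te_m(\cdot,\omega)$ is $\mathbf{m}$-a.s.\ holomorphic on $\mathcal{R}$ so that Cauchy's formula applies $\omega$-pointwise. Once this is in hand, everything reduces to a routine second-moment calculation using the orthogonality of the $\omega(n)$.
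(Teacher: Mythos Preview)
Your argument is correct and genuinely different from the paper's. The paper proves Lemma~\ref{lem:LT2} via Mellin inversion: it writes $\te_{m,X}(z,\omega)=\frac{1}{2\pi i}\int_{(c)}\te_m(z+\xi,\omega)\hat\phi(\xi)X^{\xi}\,d\xi$, shifts the contour to $\Re\xi=-\delta$ with $\delta=\tfrac14(\sigma_L-\sigma_0(\mathcal{K}))$, picks up the residue $\te_m(z,\omega)$ at $\xi=0$, and then bounds the remaining integral using the growth estimates of Lemma~\ref{lem:KE}(iii),(iv) together with the rapid decay of $\hat\phi$. This yields the quantitative rate $\EXP^{\mathbf m}[\sup_C|\te_m-\te_{m,X}|]\ll_{\mathcal K,\mathcal R,C} X^{-\delta}$.

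Your route bypasses the Mellin machinery entirely: after the same Cauchy-on-$\partial\mathcal R$ reduction, you compute the $L^2(\Omega)$-norm of $\te_m(z,\omega)-\te_{m,X}(z,\omega)$ directly from the orthonormality of $\{\omega(n)\}_{n\ \text{prime power}}$, obtaining the tail $\sum_{n>X}\Lambda(n)^2 n^{-2\sigma_L}(\log n)^{-2m-2}\to 0$. This is more elementary and, in particular, never uses parts (iii) and (iv) of Lemma~\ref{lem:KE}; since those parts are invoked nowhere else in the paper, your argument would allow them to be dropped. The trade-off is that you lose the explicit power saving $X^{-\delta}$, but the lemma only asks for $o(1)$, so nothing is lost for the present application. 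One small point worth stating explicitly in your write-up: the a.s.\ holomorphy you quote in Step~1 holds on all of $\{\Re s>1/2\}$, hence on a neighborhood of $\overline{\mathcal R}$, which is what is actually needed to apply Cauchy's formula over $\partial\mathcal R$.
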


To prove this lemma, we prepaer the following lemma.

\begin{lemma}\label{lem:KE}%Kowalski Estimate
We have the following; 
\begin{enumerate}
\item[\rm{(i)}] The series $\sum_{p} \omega(p) p^{ - s} (\log p) ^{ - m} $ is holomorphic on the domain $\Re(s) > \sigma_0(\mathcal{K})$ for almost all $\omega \in \Omega$.
Here the sum runs over all prime numbers. 

\item[\rm{(ii)}]
The function $\te_m(s)$ is holomorphic on the domain $\Re(s) >  \sigma_0(\mathcal{K}) $ for almost all $\omega \in \Omega$.
Moreover,  the equation 
\[
\te_m(s,\omega) = \sum_{p}\frac{\Li_{m +1} ( p^{ - s} \omega(p) ) }{(\log p)^m}
\]
holds on the same region for almost all $\omega \in \Omega$.
Here $\Li_{J} (z)$ means the polylogarithm function defined by $\Li_{J} (z) = \sum_{n =1}^\infty z^n/n^{J}$ for $|z| <1$ and $J \in \mathbb{N}$. 

\item[\rm{(iii)}]
For almost all $\omega \in \Omega$, 
there exists $C_1(\mathcal{K}, \sigma_L, \omega)>0$ such that the estimate 
\[
| \te_m(s, \omega) | \leq C_1(\mathcal{K}, \sigma_L, \omega) (| t | + 2)
\]
holds for $\Re(s) \geq 1/2 (\sigma_0(\mathcal{K}) + \sigma_L)$.
\item[\rm{(iv)}] There exists $C_2(\mathcal{K}, \sigma_L)>0$ such that the estimate
\[
|\EXP^{\mathbf{m}}\left[ \left| \te_m(s, \omega) \right| \right] |
\leq C_2(\mathcal{K}, \sigma_L) ( | t | +2 )
\]
holds for $\Re(s) \geq 1/2 (\sigma_0(\mathcal{K}) + \sigma_L)$.
\end{enumerate}
\end{lemma}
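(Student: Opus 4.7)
My plan is to establish the four parts of Lemma \ref{lem:KE} in the order (iv), (i), (ii), (iii), since each relies in part on the ones before. Part (iv) follows at once from orthogonality: by mutual independence and uniform distribution of the $\omega(p)$ on $\gamma_p$, $\EXP^{\mathbf{m}}[\omega(n)\overline{\omega(n')}] = \delta_{n, n'}$, hence
\[
\EXP^{\mathbf{m}}\bigl[|\te_m(s, \omega)|^2\bigr] = \sum_{n = 2}^\infty \frac{\Lambda(n)^2}{n^{2 \sigma} (\log n)^{2 m + 2}}
\]
is bounded uniformly in $t$ for $\sigma \geq \tfrac{1}{2}(\sigma_0(\mathcal{K}) + \sigma_L) > 1/2$; Cauchy--Schwarz then gives the required bound for $\EXP^{\mathbf{m}}[|\te_m(s,\omega)|]$, which is trivially dominated by $C_2(|t| + 2)$.

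For (i), the terms $\omega(p) p^{-s}/(\log p)^m$ are independent, mean zero, with $\sum_p p^{-2\sigma}/(\log p)^{2m} < \infty$ for $\sigma > 1/2$. Kolmogorov's convergence theorem yields a.s.\ convergence at each fixed $s$ with $\Re(s) > \sigma_0(\mathcal{K})$. To upgrade to a.s.\ local uniform convergence, I plan to bound $\EXP^{\mathbf{m}}\int_\Gamma |S_M(z,\omega) - S_N(z,\omega)|^2 |dz|$ for $\Gamma$ any closed contour in $\{\Re(s) > \sigma_0(\mathcal{K})\}$, which tends to zero as $M, N \to \infty$; combined with Cauchy's integral formula this yields a.s.\ Cauchy convergence in the sup norm on every compact $K \subset \{\Re(s) > \sigma_0(\mathcal{K})\}$, with a holomorphic limit.

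Part (ii) I would prove by rearrangement. Writing $n = p^k$ and using $\Lambda(p^k) = \log p$, $\omega(p^k) = \omega(p)^k$ and $(\log p^k)^{m+1} = k^{m+1}(\log p)^{m+1}$,
\[
\te_m(s, \omega) = \sum_p \frac{1}{(\log p)^m} \sum_{k = 1}^\infty \frac{(p^{-s}\omega(p))^k}{k^{m+1}} = \sum_p \frac{\Li_{m+1}(p^{-s}\omega(p))}{(\log p)^m}.
\]
The $k = 1$ contribution is precisely the series of (i), converging a.s.\ locally uniformly on $\{\Re(s) > \sigma_0(\mathcal{K})\}$; the $k \geq 2$ tail is dominated up to an $O_m(1)$ factor by $\sum_p p^{-2\sigma}/(\log p)^m$, converging absolutely and locally uniformly on $\{\Re(s) > 1/2\}$. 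The rearrangement and the holomorphy of $\te_m(\cdot, \omega)$ on $\{\Re(s) > \sigma_0(\mathcal{K})\}$ follow for a.e.\ $\omega$.

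The main obstacle is (iii). Fix $\sigma_1 = \tfrac{1}{2}(\sigma_0(\mathcal{K}) + \sigma_L)$, $\sigma_2 \in (\sigma_0(\mathcal{K}), \sigma_1)$ and $r = (\sigma_1 - \sigma_2)/2$. For $\omega$ off the null set from (ii), $\te_m(\cdot, \omega)$ is holomorphic on $\{\Re(s) > \sigma_0(\mathcal{K})\}$, so subharmonicity of $|\te_m|^2$ on disks $D(s, r)$ with $\Re(s) \in [\sigma_1, 2]$ yields, after enlarging to cover a full unit strip in $\Im(s)$,
\[
\sup_{\substack{\sigma_1 \leq \Re(s) \leq 2\\ \Im(s) \in [T, T+1]}} |\te_m(s,\omega)|^2 \ll \int_{\sigma_2}^{3} du \int_{T - r}^{T + 1 + r} |\te_m(u + iv, \omega)|^2 dv,
\]
and the range $\Re(s) \geq 2$ is handled separately via $|\te_m(s,\omega)| \leq \sum_n \Lambda(n)/(n^2(\log n)^{m+1}) = O_m(1)$. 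Taking $\EXP^{\mathbf{m}}$ of the above and invoking (iv) with Fubini makes the right-hand side have expectation $O(1)$ uniformly in $T \in \mathbb{Z}$; Markov's inequality and Borel--Cantelli over $T \in \mathbb{Z}$ then give $\sup_{\Im(s) \in [T, T+1]} |\te_m(s,\omega)| = O_\omega(T^{1/2 + \epsilon})$ almost surely for any $\epsilon > 0$, comfortably stronger than the required $O(|t| + 2)$. The delicate step will be confirming the disks $D(s, r)$ stay inside the holomorphy region for a.e.\ $\omega$, which follows from (ii); once this is in hand, the rest is routine.
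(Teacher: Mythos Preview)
Your treatment of (ii) matches the paper's, and your approaches to (iv) and (iii) are correct but genuinely different from the paper. The paper proves (iii) and (iv) together via partial summation: setting $S_\xi(\omega)=\sum_{2\le n\le \xi}\Lambda(n)\omega(n)n^{-\sigma_0(\mathcal{K})}(\log n)^{-m-1}$, one has $|S_\xi(\omega)|\le M(\mathcal{K},\omega)$ almost surely (from convergence at $\sigma_0(\mathcal{K})$), and Abel summation gives
\[
\te_m(s,\omega)=(s-\sigma_0(\mathcal{K}))\int_2^\infty S_\xi(\omega)\,\xi^{-(s+1-\sigma_0(\mathcal{K}))}\,d\xi,
\]
whence $|\te_m(s,\omega)|\ll_\omega |t|+2$ and, after Cauchy--Schwarz on $\EXP[|S_\xi|]$, also $\EXP[|\te_m(s,\omega)|]\ll |t|+2$. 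Your route (direct orthogonality for the second moment, then subharmonicity plus Borel--Cantelli) is equally valid and in fact yields the sharper a.s.\ bound $|\te_m(s,\omega)|\ll_\omega |t|^{1/2+\epsilon}$; the paper's partial-summation argument is shorter and more self-contained but gives only the linear bound.

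There is, however, a gap in your plan for (i). Showing that $\EXP^{\mathbf m}\int_\Gamma |S_M-S_N|^2\,|dz|\to 0$ gives only that the partial sums are Cauchy in $L^2(\Omega;C(K))$; this does \emph{not} by itself yield almost-sure convergence in the sup norm, and Cauchy's integral formula does nothing to bridge that gap. One would need a subsequence extraction together with a maximal inequality (or Rademacher--Menshov) to finish. The paper avoids all of this: it applies Kolmogorov's theorem once at the single real point $s=\sigma_0(\mathcal{K})$, and then invokes the elementary fact that a Dirichlet series converging at $s_0$ converges locally uniformly on $\{\Re(s)>\Re(s_0)\}$ (cf.\ \cite[Theorem~1.1]{MV2007}). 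Since you already note a.s.\ convergence at each fixed $s$, you can simply drop the contour argument and use this Dirichlet-series fact from convergence at (say) $s=\sigma_0(\mathcal{K})$; your (i) is then complete in one line.
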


\begin{proof}
We first show that the series $\sum_p \omega (p) p^{- \sigma_0(\mathcal{K})} (\log p)^{ - m}$ converges for almost all $\omega \in \Omega$. 
We have
\[
\EXP^{\mathbf{m}}\left[ \frac{ \omega (p) }{p^{\sigma_0(\mathcal{K})} (\log p)^m }\right]
=0 
\quad \textrm{and} \quad
\sum_{p} \EXP^{\mathbf{m}}\left[ \left|  \frac{ \omega (p) }{p^{\sigma_0(\mathcal{K})} (\log p)^m } \right |^2 \right]
\leq \sum_{n =1}^\infty\frac{1}{n^{2 \sigma_0(\mathcal{K})}} < \infty.
\]
Hence the assertion follows by applying Kolmogorov's theorem (see e.g. \cite[Appendix B]{K2017}). 
Combining the basic property of the Dirichlet series (see e.g. \cite[Theorem 1.1]{MV2007}) with it, we have Lemma \ref{lem:KE} (i). 

Next we prove Lemma \ref{lem:KE} (ii). 
For almost all $\omega \in \Omega$ satisfying Lemma \ref{lem:KE} (i), 
the equation
\begin{align}
\te_m(s, \omega) \nonumber
&= \lim_{X \rightarrow \infty} \sum_{2 \leq n \leq X} \frac{\Lambda (n) \omega (n) }{n^{s} (\log n)^{m +1} } \\
&= \lim_{X \rightarrow \infty} \left( \sum_{p \leq X} \frac{\omega(p)}{p^s (\log p)^m} + \sum_{p^k \leq X, k \geq2} \frac{ \omega (p)^k }{k^{m+1}p^{ks} (\log p)^m} \right)  \nonumber \\
&=  \sum_{p } \frac{\omega(p)}{p^s (\log p)^m} + \sum_{p} \sum_{k =2}^\infty \frac{ \omega (p)^k }{k^{m+1}p^{ks} (\log p)^m} \label{eqn:AS}%Absolute Convergent Series
\end{align}
holds for $\Re(s) >\sigma_0(\mathcal{K})$ since the latter sum converges absolutely.  This gives the first assertion of Lemma \ref{lem:KE} (ii). 
On the other hand, 
the equation
\begin{align*}
\sum_{p}\frac{\Li_{m +1} ( p^{-s} \omega(p) ) }{(\log p)^m} 
&= \lim_{X \rightarrow \infty} \left( \sum_{p \leq X} \frac{\omega(p)}{p^s (\log p)^m} + \sum_{p \leq X} \sum_{k =2}^\infty \frac{ \omega (p)^k }{k^{m+1}p^{ks} (\log p)^m}  \right) \\
&= \sum_{p} \frac{\omega(p)}{p^s (\log p)^m} + \sum_{p} \sum_{k =2}^\infty \frac{ \omega (p)^k }{k^{m+1}p^{ks} (\log p)^m}
\end{align*}
holds for $\Re(s) > \sigma_0(\mathcal{K})$, 
because of the same reason. 
This gives the second assertion of Lemma \ref{lem:KE} (ii). 

Finally, we will show Lemma \ref{lem:KE} (iii) and (iv). 
Define the random variables
\[
S_\xi ( \omega)
= \sum_{2 \leq n \leq \xi} \frac{\Lambda(n) \omega(n) }{n^{\sigma_0(\mathcal{K})} (\log n)^{m + 1} } \quad \textrm{and} \quad
A_{\xi}(\omega)
= \sum_{p \leq \xi} \frac{ \omega (p) }{p^{\sigma_0(\mathcal{K})} (\log p)^m }
\]
for $\xi \geq 2$ and $\omega \in \Omega$.
By the almost all convergence of the series $\sum_p \omega (p) p^{- \sigma_0(\mathcal{K})} (\log p)^{ - m}$, 
we find that $\{ A_{\xi} (\omega) \}_{\xi \geq 2}$ is bounded almost surely.
We can write
\[
S_\xi (\omega) 
= A_{\xi}(\omega) + \sum_{p^k \leq \xi, k \geq 2} \frac{\omega(p)^k}{k^{m +1} p^{k \sigma_0 (\mathcal{K})} (\log p)^m },
\]
and the latter term is bounded.
Therefore we have, for almost all $\omega \in \Omega$, 
\begin{equation}\label{eqn:ASB}%Almost Surly Bounded
|S_\xi (\omega)| \leq M(\mathcal{K}, \omega)
\end{equation}
for $\xi \geq 2$ with some positive constant $M(\mathcal{K}, \omega)$.
We fix $\omega \in \Omega$ which satisfies the inequality \eqref{eqn:ASB}. 
The estimate \eqref{eqn:ASB} and partial summation yield
\begin{equation}\label{eqn:KE}%Kowalski Equation
\te_m(s, \omega)
= \int_{2-}^{\infty} \frac{d S_\xi(\omega)}{\xi^{s - \sigma_0(\mathcal{K})}} 
= (s - \sigma_0(\mathcal{K})) \int_{2}^\infty \frac{ S_\xi(\omega) d \xi}{\xi^{s + 1 - \sigma_0(\mathcal{K})}}
\ll_{\mathcal{K}, \sigma_L, \omega} | t | + 2
\end{equation} 
for $\Re(s) \geq 1/2 (\sigma_0(\mathcal{K}) + \sigma_L)$, which gives Lemma \ref{lem:KE} (iii). 
By using the equation \eqref{eqn:KE} and by the Cauchy-Schwarz inequality, 
the estimate
\[
\EXP^{\mathbf{m}}\left[| \te_m(s, \omega)| \right] 
\ll ( | t | + 2 ) \int_{2}^\infty \frac{ \EXP^{\mathbf{m}}\left[ |S_\xi(\omega) |\right] d \xi}{\xi^{\sigma + 1 - \sigma_0(\mathcal{K})}}
\leq ( | t | + 2 ) \int_{2}^\infty \frac{ \left(\EXP^{\mathbf{m}}\left[ |S_\xi(\omega) |^2 \right] \right)^{1/2} d \xi}{\xi^{\sigma + 1 - \sigma_0(\mathcal{K})}}
\]
holds for $\Re(s) \geq 1/2(\sigma_0(\mathcal{K}) + \sigma_L )$.
Since the estimate
\[
\EXP^{\mathbf{m}}\left[ |S_\xi(\omega) |^2 \right]
= \sum_{2 \leq n \leq \xi} \frac{\Lambda(n)^2}{n^{2\sigma_0(\mathcal{K})} (\log n)^{2m +1} }
\leq \sum_{n =2}^\infty \frac{\Lambda(n)^2}{n^{2\sigma_0(\mathcal{K})} (\log n)^{2m +1} }
\ll_{\mathcal{K}} 1
\]
holds for $\xi \geq 2$, 
we have the conclusion. 
\end{proof}

\begin{remark}
The assertion \rm{(i)} of this lemma will be used in subsection \ref{sb:SP}.
In proving the universality theorem, 
Rademacher-Menshov theorem (see e.g. \cite[Appendix B]{K2015}) is used ordinarily.
However we do not need to use it this time because the second term of \eqref{eqn:AS} is absolute convergent series.
\end{remark}

\begin{proof}[Proof of Lemma \ref{lem:LT2}]
Let $C$ be a compact subset of $\mathcal{R}$ and fix $\omega \in \Omega$ satisfying (ii) and (iii) of Lemma \ref{lem:KE}.
By the Mellin inversion formula of $\phi(x)$, which is stated in (iii) of Lemma \ref{lem:MT}, we have
\begin{align*}
\te_{m,X}(z,\omega) 
&= \frac{1}{2 \pi i} \sum_{n=2}^\infty \int_{c - i \infty}^{c + i \infty} \frac{\Lambda(n) \omega(n) \hat{\phi} (\xi) }{n^{\xi} (\log n)^{m + 1} } \left( \frac{n}{X} \right)^{- \xi} d\xi \\
&= \frac{1}{2 \pi i} \int_{c - i \infty}^{c + i \infty} \te_{m}(z + \xi,\omega) \hat{\phi} (\xi) X^\xi d\xi
\end{align*}
for $z \in \partial \mathcal{R}$ and for $c >1$,
where the interchange of the sum and the integral is justified by Fubini's theorem and (ii) of Lemma \ref{lem:MT}.
We will replace the contour from $c - i \infty$ to $c + i \infty$ by the one from $- \delta - i \infty$ to $- \delta + i \infty$ with $\delta = 1/4 ( \sigma_L - \sigma_0(\mathcal{K}) )$.
By our choice of $\phi(x)$ and by (i) of Lemma \ref{lem:MT}, 
we have
\begin{align*}
\te_{m}(z,\omega) - \te_{m,X}(z,\omega) 
= - \frac{1}{2 \pi i} \int_{- \delta - i \infty}^{- \delta + i \infty} \te_{m}(z + \xi,\omega) \hat{\phi} (\xi) X^\xi d\xi,
\end{align*}
where such a replacement is justified by (ii) of Lemma \ref{lem:MT} and (iii) of Lemma \ref{lem:KE}.
Using Cauchy's formula, we have
\begin{align*}
&\sup_{s \in C} \left| \te_m(s, \omega) - \te_{m,X} (s, \omega) \right| \\
\leq& \frac{1}{2 \pi \dist(C, \partial \mathcal{R})}
\int_{\partial \mathcal{R}} \left| \te_{m}(z,\omega) - \te_{m,X}(z,\omega) \right| | dz |\\
\leq& \frac{X^{- \delta}}{4 \pi^2 \dist(C, \partial \mathcal{R})} 
\int_{\partial \mathcal{R}} |dz| \times\\
& \times \left( \int_{ - \infty}^{ \infty} \left| \te_{m} \left(- \delta + \Re(z) + i \left( t + \Im(z) \right) ,\omega \right) \hat{\phi} (- \delta + it) \right| d t \right).
 \end{align*}
Taking the expectation of the above inequality, we obtain
\begin{align*}
&\EXP^{\mathbf{m}} \left[ \sup_{s \in C} \left| \te_m(s, \omega) - \te_{m,X} (s, \omega) \right| \right] \\
\leq& \frac{ \ell(\partial \mathcal{R}) X^{- \delta}}{4 \pi^2 \dist(C, \partial \mathcal{R})} 
\times  \\
& \times \sup_{z \in \partial \mathcal{R}} \left( \int_{ - \infty}^{ \infty} \EXP\left[ \left| \te_{m} \left(- \delta + \Re(z) + i \left( t + \Im(z) \right) ,\omega) \right) \right| \right]  \left| \hat{\phi} (- \delta + it) \right| d t \right) \\
\ll&_{\mathcal{K}, \mathcal{R}, C} X^{- \delta}
\rightarrow 0
\end{align*}
as $X \rightarrow \infty$ by the estimate (iv) of Lemma \ref{lem:KE} and (ii) of Lemma \ref{lem:MT}.
This completes the proof.
\end{proof}

We give one more lemma to prove the proposition.
Let $\mathcal{P}_0$ be a finite subset of the set of prime numbers. 
Define the probability measures $\mathbb{H}_T^{\mathcal{P}_0}$ on $\left(\prod_{p \in \mathcal{P}_0} \gamma_p, \mathcal{B}\left( \prod_{p \in \mathcal{P}_0} \gamma_p \right) \right)$ by
\[
\mathbb{H}_T^{\mathcal{P}_0}(A)
= \frac{1}{\meas\left(\mathcal{I}_{\mathcal{K}}(T) \right)}
\meas \left\{ \tau \in \mathcal{I}_{\mathcal{K}}(T) ~;~ (p^{i \tau})_{p \in \mathcal{P}_0} \in A \right\}
\]
for $A \in \mathcal{B}\left( \prod_{p \in \mathcal{P}_0} \gamma_p \right)$ and write $\mathbf{m}_{\mathcal{P}_0} = \otimes_{p \in \mathcal{P}_0} \mathbf{m}_{p}$.
Then we have the following lemma.

\begin{lemma}\label{lem:WC}%Weyl's criterion
The probability measure $\mathbb{H}_T^{\mathcal{P}_0}$ converges weakly to $\mathbf{m}_{\mathcal{P}_0}$ as $T \rightarrow \infty$.
\end{lemma}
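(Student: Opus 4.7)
My plan is to prove weak convergence on the compact abelian group $\prod_{p \in \mathcal{P}_0} \gamma_p$ by verifying pointwise convergence of Fourier transforms (characters), which is equivalent to weak convergence on such a group. The characters of $\prod_{p \in \mathcal{P}_0} \gamma_p$ are precisely the maps
\[
(z_p)_{p \in \mathcal{P}_0} \longmapsto \prod_{p \in \mathcal{P}_0} z_p^{k_p}
\]
indexed by tuples $(k_p)_{p \in \mathcal{P}_0} \in \mathbb{Z}^{\mathcal{P}_0}$. Against $\mathbf{m}_{\mathcal{P}_0}$, such a character integrates to $1$ when all $k_p = 0$ and to $0$ otherwise, by the independence of the $\mathbf{m}_p$ together with $\int_{\gamma_p} z^{k_p} d\mathbf{m}_p(z) = 0$ for $k_p \neq 0$. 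Hence it suffices to show that
\[
\int \prod_{p \in \mathcal{P}_0} z_p^{k_p} d\mathbb{H}_T^{\mathcal{P}_0}(z)
= \frac{1}{\meas(\mathcal{I}_{\mathcal{K}}(T))} \int_{\mathcal{I}_{\mathcal{K}}(T)} \exp\Bigl( i \tau \sum_{p \in \mathcal{P}_0} k_p \log p \Bigr) d\tau
\]
tends to the analogous integral against $\mathbf{m}_{\mathcal{P}_0}$ for every $(k_p) \in \mathbb{Z}^{\mathcal{P}_0}$.

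The trivial character case is immediate. For a non-trivial character, the key input is the $\mathbb{Q}$-linear independence of $\{\log p : p \text{ prime}\}$, which follows from unique factorization: if $(k_p)$ is not identically zero, then $\alpha := \sum_{p \in \mathcal{P}_0} k_p \log p \neq 0$. First I would estimate the integral over the full interval $[T, 2T]$ by
\[
\int_T^{2T} e^{i \alpha \tau} d\tau = \frac{e^{2i \alpha T} - e^{i \alpha T}}{i \alpha} = O(|\alpha|^{-1}),
\]
and then pass to $\mathcal{I}_{\mathcal{K}}(T)$ via the bound
\[
\left| \int_{[T,2T]} e^{i\alpha \tau} d\tau - \int_{\mathcal{I}_{\mathcal{K}}(T)} e^{i\alpha \tau} d\tau \right|
\leq \meas\bigl( [T, 2T] \setminus \mathcal{I}_{\mathcal{K}}(T) \bigr) = o(T),
\]
where the last equality uses the zero-density estimate already invoked in the subsection (namely $\meas(\mathcal{I}_{\mathcal{K}}(T)) \sim T$). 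Combining these and dividing by $\meas(\mathcal{I}_{\mathcal{K}}(T)) \sim T$ yields
\[
\frac{1}{\meas(\mathcal{I}_{\mathcal{K}}(T))} \int_{\mathcal{I}_{\mathcal{K}}(T)} e^{i\alpha \tau} d\tau = O\!\left( \frac{1}{T|\alpha|} \right) + o(1) \longrightarrow 0
\]
as $T \to \infty$, which matches the value $0$ obtained under $\mathbf{m}_{\mathcal{P}_0}$.

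I do not anticipate a real obstacle: the argument is a standard Weyl-type equidistribution estimate, and the only subtlety is the replacement of $[T, 2T]$ by the slightly smaller set $\mathcal{I}_{\mathcal{K}}(T)$, which is handled by an elementary measure comparison using the already-noted asymptotic $\meas(\mathcal{I}_{\mathcal{K}}(T)) \sim T$. The equivalence between pointwise convergence of Fourier transforms and weak convergence of probability measures on a compact abelian group (an immediate consequence of Stone-Weierstrass applied to the algebra of trigonometric polynomials) concludes the proof.
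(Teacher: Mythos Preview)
Your proposal is correct and matches the paper's own proof essentially line for line: both reduce weak convergence to pointwise convergence of characters on the torus $\prod_{p\in\mathcal{P}_0}\gamma_p$, handle the trivial character immediately, and for a nontrivial character use $\alpha=\sum_p k_p\log p\neq 0$ (unique factorization) together with the comparison $\meas([T,2T]\setminus\mathcal{I}_{\mathcal{K}}(T))=o(T)$ to show the oscillatory integral is $o(1)$. The only cosmetic difference is that the paper writes out the antiderivative of $e^{i\alpha\tau}$ explicitly rather than quoting the $O(|\alpha|^{-1})$ bound.
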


\begin{proof}
It is enough to check that the Fourier transform of $\mathbb{H}_T^{\mathcal{P}_0}$ converges pointwise to that of $\mathbf{m}_{\mathcal{P}_0} $ as $T \rightarrow \infty$.
Let $\mathcal{F}(~\cdot~;\mathbb{H}_T^{\mathcal{P}_0})$ denote the Fourier transform of $\mathbb{H}_T^{\mathcal{P}_0}$. 
For any $\mathbf{n}= (n_p)_{p \in \mathcal{P}_0} \in \mathbb{Z}^{\mathcal{P}_0}$, 
we have
\begin{equation*}
\mathcal{F}(\mathbf{n};\mathbb{H}_T^{\mathcal{P}_0})
=\int_{\underset{p \in \mathcal{P}_0}{\Pi} \gamma_p} \prod_{p \in \mathcal{P}_0} x_p^{n_p} ~ d \mathbb{H}_T^{\mathcal{P}_0}\left((x_p)_{p \in \mathcal{P}_0}\right)
=\frac{1}{\meas\left(\mathcal{I}_{\mathcal{K}}(T) \right)} 
\int_{\mathcal{I}_{\mathcal{K}}(T)} \prod_{p \in \mathcal{P}_0} p^{i n_p \tau} d \tau. 
\end{equation*}
If $\mathbf{n} = \mathbf{0}$, then we have $\lim_{T \rightarrow \infty} \mathcal{F}(\mathbf{0};\mathbb{H}_T^{\mathcal{P}_0})=1$.
If $\mathbf{n} \neq \mathbf{0}$, we have
\begin{align*}
&\mathcal{F}(\mathbf{n};\mathbb{H}_T^{\mathcal{P}_0})
=\frac{1}{\meas\left(\mathcal{I}_{\mathcal{K}}(T) \right)}
\int_{T}^{2T} \exp\left( i \tau \sum_{p \in \mathcal{P}_0} n_p \log p \right) d \tau
+ O\left(\frac{T - \meas\left(\mathcal{I}_{\mathcal{K}}(T)\right)}{\meas\left(\mathcal{I}_{\mathcal{K}}(T) \right)}\right) \\
&= \frac{1}{\meas \left(\mathcal{I}_{\mathcal{K}}(T) \right)} \left[ \frac{ \exp\left( i \tau \sum_{p \in \mathcal{P}_0} n_p \log p \right)}{ i \sum_{p \in \mathcal{P}_0} n_p \log p} \right]_{\tau = T}^{\tau = 2T} + O\left(\frac{T - \meas\left(\mathcal{I}_{\mathcal{K}}(T)\right)}{\meas\left(\mathcal{I}_{\mathcal{K}}(T) \right)}\right).
\end{align*}
The above first term tends to $0$ trivially and so does the second term by the estimate $\meas\left(\mathcal{I}_{\mathcal{K}}(T)\right) \sim T$. 
This completes the proof. 
\end{proof}

\begin{proof}[Proof of Proposition \ref{prop:LT}]
Let $\mathbb{P}_T$ denote the probability measure on $(\mathcal{I}_{\mathcal{K}}(T), \mathcal{B}(\mathcal{I}_{\mathcal{K}}(T)) )$ given by
\[
\mathbb{P}_T(E) = \frac{1}{\meas\left(\mathcal{I}_{\mathcal{K}}(T) \right)} \meas(E), \quad E \in \mathcal{B}(\mathcal{I}_{\mathcal{K}}(T) ).
\]
Let $F:\mathcal{H}(\mathcal{R}) \rightarrow \mathbb{R}$ be a bounded and Lipschitz continuous function. 
Then there exist $M(F),C(F)\geq 0$ such that the estimates
\[
| F(f) | \leq M(F) \AND | F ( f ) - F (g) | \leq C(F) d (f,g)
\] 
hold for any $f,g \in \mathcal{H}(\mathcal{R})$.
It suffices to show that $\left| \EXP^{\mathcal{Q}_T} \left[ F \right] - \EXP^{\mathcal{Q}} \left[ F \right] \right| \rightarrow 0$ as $T \rightarrow \infty$ by the property of weak convergence (see e.g. \cite[Theorem 3.9.1]{D2010}).
We find that
\begin{align*}
&\left| \EXP^{\mathcal{Q}_T} \left[ F \right] - \EXP^{\mathcal{Q}} \left[ F \right] \right|
= \left| \EXP^{\mathbb{P}_T} \left[ F(\te_m(s + i\tau)) \right] - \EXP^{\mathbf{m}} \left[ F ( \te_m(s, \omega) ) \right] \right| \\
\leq & \left| \EXP^{\mathbb{P}_T} \left[ F(\te_m(s + i\tau)) \right] - \EXP^{\mathbb{P}_T} \left[ F(\te_{m,X}(s + i\tau)) \right]  \right| \\
&+ \left| \EXP^{\mathbb{P}_T} \left[ F(\te_{m,X}(s + i\tau)) \right] - \EXP^{\mathbf{m}} \left[ F ( \te_{m,X} (s, \omega) ) \right] \right| \\
& \quad + \left| \EXP^{\mathbf{m}} \left[ F ( \te_{m,X} (s, \omega) ) \right] - \EXP^{\mathbf{m}} \left[ F ( \te_m(s, \omega) ) \right] \right| \\
=:& \Sigma_1 (T,X) + \Sigma_2 (T,X) + \Sigma_3 (T,X).
\end{align*}
First we estimate $\Sigma_1 (T,X)$.
We have
\begin{align*}
\Sigma_1 (T,X)
\leq&  \frac{M(F) \meas \left(\mathcal{I}_{\mathcal{K}}(T) \setminus \mathscr{X}_{\mathcal{K}}( T ) \right) }{\meas \left(\mathcal{I}_{\mathcal{K}}(T) \right)} \\
&+ C(F)  \EXP^{\mathbb{P}_T} \left[ d \left( \te_m(s + i\tau), \te_{m,X}(s + i\tau) \right) ; \tau \in \mathscr{X}_{\mathcal{K}}( T ) \right],
\end{align*}
where $\EXP[X; A] $ stands for $\EXP[X \mathbbm{1}_A]$ for a random variable $X$ and an indicator function $\mathbbm{1}_A$ of the set $A$.
The first term of the above inequality tends to $0$ as $T \rightarrow \infty$, 
since 
\[
\mathscr{X}_{\mathcal{K}}(T) \subset \mathcal{I}_{\mathcal{K}}(T) \AND 
\lim_{T \rightarrow \infty}\frac{\meas\left( \mathscr{X}_{\mathcal{K}}(T) \right)}{T}
= \lim_{T \rightarrow \infty}\frac{\meas\left( \mathcal{I}_{\mathcal{K}}(T) \right)}{T}
=1.
\]
As for the second term, we have
\begin{align*}
&\EXP^{\mathbb{P}_T} \left[ d \left( \te_m(s + i\tau), \te_{m,X}(s + i\tau) \right) ; \tau \in \mathscr{X}_{\mathcal{K}}( T ) \right] \\
\leq& \sum_{j =1}^\infty \frac{\min\left\{\EXP^{\mathbb{P}_T} \left[ d_j \left( \te_m(s + i\tau), \te_{m,X}(s + i\tau) \right) ; \tau \in \mathscr{X}_{\mathcal{K}}( T ) \right], 1\right\}}{2^j}.
\end{align*}
By Lemma \ref{lem:LT1} and Lebesgue's dominated convergence theorem, 
this also tends to $0$ as we take $\lim_{X \rightarrow \infty} \limsup_{T \rightarrow \infty}$.

Next we estimate $\Sigma_2 (T,X)$.
Put 
\[
\mathcal{P}(\phi, X)
=\left\{ p~;~ \textrm{$p$ divides $\displaystyle \prod_{ \substack{n \in \mathbb{N};\\ \phi(n/X) \neq 0 } } n$ } \right\}
\]
and define the continuous mapping $\Phi_{m,X}:\prod_{p \in \mathcal{P}(\phi, X)} \gamma_p \rightarrow \mathcal{H} ( \mathcal{R} )$ by
\[
\Phi_{m,X}(x) = \sum_{\substack{n = 2 \\ \phi(n /X) \neq 0 }}^{\infty} \frac{\Lambda (n) \phi (n /X) }{n^s (\log n)^{m +1}} \prod_{p|n} x_p^{- i \tau \nu (p;n)}, \quad
x = (x_p)_{p \in \mathcal{P}(\phi, X)} \in \prod_{p \in \mathcal{P}(\phi, X)} \gamma_p,
\]
where $\nu(p;n)$ is the exponent of $p$ in the prime factorization of $n$.
Then we have
\begin{equation*}
\EXP^{\mathbb{P}_T} \left[ F(\te_{m,X}(s + i\tau)) \right]
= \EXP^{\mathbb{H}_T^{\mathcal{P}(\phi, X)} \circ \Phi_{m,X}^{-1} }\left[ F \right]
\rightarrow \EXP^{\mathbf{m}_{\mathcal{P}(\phi, X)} \circ \Phi_{m,X}^{-1} }\left[ F \right]
= \EXP^{\mathbf{m}} \left[ F ( \te_{m,X} (s, \omega) ) \right]
\end{equation*}
as $T \rightarrow \infty$ by Lemma \ref{lem:WC} and the property of weak convergence (see e.g. \cite[Section 2, The Mapping Theorem]{B1999}).
Hence we obtain $\lim_{T \rightarrow} \Sigma_2 (T,X) = 0$.

Finally we estimate $\Sigma_3 (T,X)$.
We have
\[
\Sigma_3 (T,X)
\leq C(F) \sum_{j =1}^\infty \frac{\min \left\{ \EXP^{\mathbf{m}} \left[ d_j \left( \te_m(s, \omega) ,\te_{m,X}(s , \omega) \right) \right], 1 \right\}}{2^j},
\]
and the right hand side of the above inequality tends to $0$ as $X \rightarrow \infty$ by Lemma \ref{lem:LT2} and Lebesgue's dominated convergence theorem.
This completes the proof.
\end{proof}

\subsection{Proof of Proposition \ref{prop:SP}}\label{sb:SP}%Support of Prob.
We will prove the proposition by using the standard method as in \cite{L1996} and \cite{S2007}.
First we recall the following lemmas.

\begin{lemma}\label{lem:DLH}%Denseness Lemma in Holomorphic function
Let $D$ be simply connected region in the complex plane.
Suppose that the sequence $\{f_n\}_{n =1}^\infty$ in $\mathcal{H}(D)$ satisfies the following assumptions.
\begin{itemize}
\item[(a)] If $\mu$ is a complex Borel measure on $(\mathbb{C}, \mathcal{B}(\mathbb{C}))$ with compact support contained in $D$ such that
\[
\sum_{n =1}^{\infty} \left| \int_{\mathbb{C}} f_n d \mu \right| < \infty,
\]
then
\[
\int_{\mathbb{C}} s^r d \mu (s) = 0
\]
for any $r = 0, 1, 2, \ldots$.
\item[(b)] The series $\sum_{n =1}^\infty f_n$ converges in $\mathcal{H}(D)$.
\item[(c)] For every compact set $K \subset D$, 
\[
\sum_{n =1}^\infty \max_{s \in K} | f_n (s) |^2 < \infty.
\]
\end{itemize}
Then the set of all convergent series 
\[
\sum_{n = 1}^{\infty} c(n) f_n, \quad c(n) \in \gamma
\]
is dense in $\mathcal{H}(D)$.
\end{lemma}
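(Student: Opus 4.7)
The plan is to argue by contradiction, combining Hahn--Banach duality in the Fréchet space $\mathcal{H}(D)$ with a Kolmogorov-type three-series input from (c) and Runge's theorem via the simple connectedness of $D$. The target of the contradiction is a complex Borel measure $\mu$ with compact support in $D$ which is separating on $\mathcal{H}(D)$ yet, by (a), annihilates all polynomials, and hence (by Runge) annihilates all of $\mathcal{H}(D)$.

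\textbf{Step 1 (random series input).} Using (c) together with the Kolmogorov three-series theorem, applied on each compact set in an exhaustion $D = \bigcup K_j$, I would first establish that the random series $F(\omega) := \sum_n \omega_n f_n$ with independent $\omega_n$ uniformly distributed on $\gamma$ converges almost surely in $\mathcal{H}(D)$. In particular, almost every realisation lies in $S := \{\sum_n c(n) f_n \,:\, c(n) \in \gamma,\text{ convergent}\}$, so the law of $F$ is supported in $\overline{S}$.

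\textbf{Step 2 (separating measure).} Assuming $\overline{S} \neq \mathcal{H}(D)$, I would use the fact that the topological dual of $\mathcal{H}(D)$ (with the compact-open topology) is realised, via integration, by complex Borel measures compactly supported in $D$. Hahn--Banach applied to the closed convex hull $\overline{\mathrm{conv}}(\overline{S})$ would then produce such a measure $\mu$, a real $\alpha$, and a point $g_0 \in \mathcal{H}(D)$ with
\[
\mathrm{Re}\int h \, d\mu \leq \alpha \quad \text{for all } h \in \overline{S}, \qquad \mathrm{Re}\int g_0 \, d\mu > \alpha.
\]
The delicate point here is that $\overline{S}$ itself is not convex; the standard remedy is to exploit the rotational symmetry of the Haar measure on $\gamma$ to reduce convex-hull separation back to separation of $\overline{S}$.

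\textbf{Step 3 (phase alignment).} Writing $a_n := \int f_n \, d\mu$, the previous step gives $\mathrm{Re}\sum_n \omega_n a_n \leq \alpha$ almost surely. The goal is to deduce $\sum_n |a_n| < \infty$. If instead $\sum_n |a_n| = \infty$, pick $N$ with $\sum_{n \leq N} |a_n| > \alpha + 2$; then for small $\delta$, the event $A_N := \bigcap_{n \leq N} \{|\omega_n - \bar a_n/|a_n|| < \delta\}$ has positive probability and is, by independence, independent of the tail $\sum_{n > N} \omega_n a_n$, whose distribution is symmetric so that $B_N := \{\mathrm{Re}\sum_{n > N} \omega_n a_n \geq 0\}$ has probability at least $1/2$. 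On $A_N \cap B_N$ one would have $\mathrm{Re}\sum_n \omega_n a_n > \alpha$, contradicting the a.s.\ bound. Hence $\sum_n |a_n| < \infty$, and hypothesis (a) forces $\int s^r \, d\mu = 0$ for every $r \geq 0$. By Runge's theorem (invoking simple connectedness of $D$), polynomials are dense in $\mathcal{H}(D)$, so $\int g\, d\mu = 0$ for all $g \in \mathcal{H}(D)$, contradicting $\mathrm{Re}\int g_0 \, d\mu > \alpha$.

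\textbf{Main obstacle.} The conceptual difficulty is concentrated in Steps 2 and 3: the set $\overline{S}$ is not convex, so the Hahn--Banach separation must be applied to its convex hull, and the passage back to $\overline{S}$ requires the phase-alignment argument which extracts absolute summability of $(a_n)$ from a purely one-sided inequality. Both steps rest essentially on the full rotational symmetry of the Haar measure on $\gamma$ together with the $\ell^2$-type bound provided by (c); condition (b) ensures that the class of admissible summation orders is nonempty, and (a) supplies the rigidity that, combined with Runge, collapses the separating measure at the end.
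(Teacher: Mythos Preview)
The paper does not prove this lemma; it simply refers to \cite[Theorem 3.10]{L1996} and \cite[Theorem 5.7]{S2007}. So your argument has to be judged against those classical proofs. Your Steps~1 and~3 are sound: condition~(c) together with Kolmogorov's three-series theorem gives almost-sure convergence of $F(\omega)=\sum_n\omega_nf_n$ in $\mathcal H(D)$, and the phase-alignment argument legitimately converts an almost-sure bound $\mathrm{Re}\sum_n\omega_na_n\le\alpha$ into $\sum_n|a_n|<\infty$; the endgame via hypothesis~(a) and Runge's theorem is also correct.

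The genuine gap is Step~2. You pass from $g_0\notin\overline S$ to a Hahn--Banach separation of $g_0$ from $\overline{\mathrm{conv}}(\overline S)$, but nothing you have established excludes $g_0\in\overline{\mathrm{conv}}(\overline S)\setminus\overline S$. The ``rotational symmetry'' remark does not close this: Haar invariance on each factor $\gamma$ only says that the law of $F$ is unchanged under coefficientwise rotations, which carries no convexity information about $\overline S$; and Step~3 cannot rescue Step~2, since its \emph{input} is precisely the half-space inequality you are trying to produce. Concretely, for $u_n=2^{-n}e_n$ in $\ell^2$ the set $\overline S$ is a compact infinite torus while $\overline{\mathrm{conv}}(\overline S)$ is the strictly larger polydisc; it is hypothesis~(a) that must exclude such behaviour, yet in your scheme (a) enters only \emph{after} the separation. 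The proofs in the cited references handle this by first restricting to a Bergman Hilbert space $H^2(U)$ over a relatively compact $U\subset D$ (so that (c) gives $\sum\|f_n\|_{H^2}^2<\infty$) and then invoking a nontrivial Hilbert-space density lemma of Pechersky--Bagchi type: if $\sum\|u_n\|^2<\infty$, $\sum u_n$ converges, and $\sum|\langle u_n,w\rangle|=\infty$ for every $w\ne0$, then the convergent sums $\sum c_nu_n$ with $|c_n|=1$ are already dense in $H$. Proving \emph{that} lemma is where the real work lies, and it is exactly the content your Step~2 is missing.
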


\begin{proof}
This can be found in \cite[Theorem 3.10]{L1996} and \cite[Theorem 5.7]{S2007}.
\end{proof}

\begin{lemma}\label{lem:ET}%Exponential type
Let $\mu$ be a complex Borel measure on $(\mathbb{C}, \mathcal{B}(\mathbb{C}))$ with compact support contained in the half-plane $\sigma> \sigma_0$.
Moreover, for $s \in \mathbb{C}$, define the function
\[
f(s) = \int_{\mathbb{C}} \exp ( s z ) d \mu (z).
\]
Then $f$ is an entire function of exponential type. 
If $f(s)$ does not vanish identically, then
\[
\limsup_{r \rightarrow \infty} \frac{ \log | f (r) | }{r} > \sigma_0.
\]
\end{lemma}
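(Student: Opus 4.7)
The plan is to dispose of the entireness and exponential-type claims immediately and concentrate on the nontrivial inequality. Because $\supp \mu$ is compact, one may differentiate $e^{sz}$ under the integral sign with the dominated majorant $|z|^k e^{R|s|}$, where $R = \sup_{z \in \supp \mu}|z|$, so $f$ is entire; the same bound gives $|f(s)| \leq |\mu|(\mathbb{C}) e^{R|s|}$, which is the required exponential type.

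For the main inequality I argue by contradiction. Suppose $\limsup_{r \to \infty} \log|f(r)|/r \leq \sigma_0$; I will show $f \equiv 0$. Because $K := \supp \mu$ is compact in the open half-plane $\{\Re z > \sigma_0\}$, we have $\sigma_2 := \min_{z \in K} \Re z > \sigma_0$, so one can fix $\sigma_1 \in (\sigma_0, \sigma_2)$ and put $g(s) := f(s) e^{-\sigma_1 s}$, which is still entire of exponential type, say $\tau$. The hypothesis yields, for any $\epsilon \in (0, \sigma_1 - \sigma_0)$ and $r$ sufficiently large, $|g(r)| \leq e^{(\sigma_0 + \epsilon - \sigma_1) r}$, so $g$ decays exponentially on the positive real axis. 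On the other side, $g(-r) = \int_K e^{-r(z - \sigma_1)} d\mu(z)$ together with $\Re(z - \sigma_1) \geq \sigma_2 - \sigma_1 > 0$ on $K$ gives $|g(-r)| \leq |\mu|(\mathbb{C}) e^{-(\sigma_2 - \sigma_1) r}$, which also decays exponentially. Thus $g$ is entire of exponential type and decays exponentially on the whole real axis; in particular $g \in L^1(\mathbb{R}) \cap L^2(\mathbb{R})$.

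The final—and main—step is to conclude that such a $g$ must vanish identically, which is where I would invoke the Paley--Wiener theorem. Since $g \in L^2(\mathbb{R})$ is entire of exponential type $\tau$, its Fourier transform $\hat{g}$ is supported in the compact interval $[-\tau, \tau]$; meanwhile the exponential decay of $g$ on $\mathbb{R}$ implies that $\hat{g}$ extends to a function holomorphic in a horizontal strip $|\Im \xi| < \delta$ around the real axis. A function holomorphic on this connected strip that vanishes a.e.\ on the non-empty open subset $\{\xi \in \mathbb{R} : |\xi| > \tau\}$ must vanish identically by the identity theorem, forcing $\hat{g} \equiv 0$, and hence $g \equiv 0$ and $f \equiv 0$. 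This Fourier-analytic ingredient is the principal obstacle; a purely complex-analytic alternative would be to appeal to P\'{o}lya's representation theorem, which identifies $\limsup_{r \to \infty} \log|f(r)|/r$ with $\max_{z \in D} \Re z$ for the conjugate indicator diagram $D \subseteq \mathrm{conv}(K) \subset \{\Re z > \sigma_0\}$ of $f$, a set which is non-empty as soon as $f \not\equiv 0$.
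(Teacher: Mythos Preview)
Your argument is correct. The only cosmetic point is in the last step: $\{\xi\in\mathbb{R}:|\xi|>\tau\}$ is open in $\mathbb{R}$, not in the strip, so you should say that the continuous function $\hat g|_{\mathbb{R}}$ vanishes a.e.\ on $(\tau,\infty)$, hence everywhere there, and then invoke the identity theorem via an accumulation point of real zeros. This is a phrasing issue, not a gap.

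As for comparison with the paper: the paper does not prove this lemma at all but simply cites \cite[Lemma~4.10]{L1996}. The proof in Laurin\u{c}ikas's book proceeds through the Phragm\'en--Lindel\"of indicator $h_f(\theta)=\limsup_{r\to\infty}r^{-1}\log|f(re^{i\theta})|$ and the classical inequality $h_f(\theta)+h_f(\theta+\pi)\ge 0$ for nonzero entire functions of exponential type; since $|f(-r)|\le|\mu|(\mathbb{C})e^{-\sigma_2 r}$ gives $h_f(\pi)\le-\sigma_2$, one gets $h_f(0)\ge\sigma_2>\sigma_0$ directly. Your Paley--Wiener route is a genuine alternative: it trades the indicator-function machinery for an elementary Fourier argument, at the cost of a slightly longer setup (the auxiliary function $g$ and the two-sided decay estimate). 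The P\'olya/Borel-transform approach you mention at the end is essentially the textbook one in disguise, since the conjugate indicator diagram encodes exactly the inequality $h_f(0)+h_f(\pi)\ge 0$.
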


\begin{proof}
This can be found in \cite[Lemma 4.10]{L1996}.
\end{proof}

\begin{lemma}\label{lem:BT}%Bernstein theoremの一般化
Let $f(s)$ be an entire function of exponential type, and let $\{ \xi_m \}_{m =1}^\infty$ be a sequence of complex numbers.
Moreover, assume that there are positive real constants $\lambda$, $\eta$, and $\omega$ such that
\begin{itemize}
\item[(a)] $\limsup_{y \rightarrow \infty} \frac{\log | f ( \pm i y ) |}{y} \leq \lambda$,
\item[(b)] $| \xi_m - \xi_n | \geq \omega |m - n|$ for any $m, n \in \mathbb{N}$,
\item[(c)] $\lim_{m \rightarrow \infty} \frac{\xi_m}{m} = \eta$,
\item[(d)] $\lambda \eta < \pi$.
\end{itemize}
Then
\[
\limsup_{m \rightarrow \infty} \frac{ \log | f (\xi_m) | }{ | \xi_m | }
= \limsup_{r \rightarrow \infty} \frac{ \log | f (r) |  }{r}.
\]
\end{lemma}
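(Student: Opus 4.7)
Write $\sigma := \limsup_{m \to \infty} \log|f(\xi_m)|/|\xi_m|$ and $\tau := \limsup_{r \to \infty} \log|f(r)|/r$. The plan is to prove the two inequalities $\sigma \le \tau$ and $\tau \le \sigma$ separately; the first is a direct Phragm\'en--Lindel\"of argument, while the second is the genuinely substantive direction and rests on a Bernstein-type interpolation theorem for entire functions of exponential type.

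For $\sigma \le \tau$, from hypothesis (a) and the definition of $\tau$ we have, for every $\epsilon>0$, the bounds $|f(\pm iy)| \le e^{(\lambda+\epsilon)|y|}$ and $|f(r)| \le e^{(\tau+\epsilon)r}$ for all sufficiently large $|y|$ and $r>0$. Applying the Phragm\'en--Lindel\"of principle separately in the two quadrants $\{\arg z \in (0,\pi/2)\}$ and $\{\arg z \in (-\pi/2,0)\}$ yields the radial bound
\[
\limsup_{r \to \infty}\frac{\log|f(re^{i\theta})|}{r} \le \tau\cos\theta + \lambda|\sin\theta| \qquad (|\theta|\le \pi/2).
\]
Condition (c), $\xi_m/m \to \eta>0$, forces $\arg\xi_m \to 0$, so that $\cos(\arg\xi_m) \to 1$ and $|\sin(\arg\xi_m)| \to 0$. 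Inserting $z = \xi_m$ into the above inequality gives $\sigma \le \tau$.

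For $\tau \le \sigma$, I argue by contradiction: suppose $\sigma < \alpha < \tau$ for some $\alpha$, so $|f(\xi_m)| \le e^{\alpha|\xi_m|}$ for all large $m$. Introduce the canonical product $B(z) = \prod_m (1-z/\xi_m)$ (with Weierstrass convergence factors if needed), which is well-defined by (b) and (c) and vanishes exactly on $\{\xi_m\}$. Since the sequence $\{\xi_m\}$ has asymptotic linear density $1/\eta$ along a ray arbitrarily close to $\mathbb{R}$, a Jensen-type computation gives
\[
\limsup_{|y| \to \infty} \frac{\log|B(iy)|}{|y|} = \frac{\pi}{\eta}.
\]
The separation hypothesis (b) prevents the factors $|B'(\xi_m)|$ from being too small and gives standard control on the interpolation kernel $K_m(z) := B(z)/[B'(\xi_m)(z-\xi_m)]$. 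Condition (d), $\lambda\eta<\pi$, is precisely the gap condition under which the Cartwright--Levinson theory provides a convergent interpolation representation
\[
f(z) = \sum_{m} f(\xi_m) K_m(z);
\]
the point is that $f/B$ has exponential type strictly smaller than $\pi/\eta$ along the imaginary axis, so the residue expansion converges. Substituting $|f(\xi_m)| \le e^{\alpha|\xi_m|}$ and the pointwise bounds on $|K_m(r)|$ along the positive real axis yields $|f(r)| \le C\, e^{\alpha r}$ for all large $r$, contradicting $\alpha < \tau$. Hence $\tau \le \sigma$.

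The main obstacle is the quantitative interpolation step in the second half: verifying that under precisely the density condition (c), the separation condition (b), and the gap condition (d), the interpolation series converges and yields the claimed bound along $\mathbb{R}^+$. This is the classical content of the Bernstein--Levinson sampling theorem, and the detailed verification reduces to careful estimates on the canonical product $B$ and its derivative at the nodes $\xi_m$.
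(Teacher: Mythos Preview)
The paper does not actually prove this lemma; it simply invokes Bernstein's theorem \cite{L1936} and refers to \cite[Theorem~4.12]{L1996} for the full argument. Your plan is correct and is essentially the proof found in those references: the easy direction $\sigma \le \tau$ is a Phragm\'en--Lindel\"of argument (with the uniformity in $\theta$ needed to handle $\arg\xi_m \to 0$, which you should make explicit), and the substantive direction $\tau \le \sigma$ is exactly the Bernstein--Duffin--Schaeffer boundedness-transfer principle for entire functions of exponential type on separated sequences of positive density. The textbook formulation applies Bernstein's theorem as a black box to the auxiliary function $g(z)=f(z)e^{-\alpha z}$ (bounded on $\{\xi_m\}$, hence bounded on $\mathbb{R}^+$ under the gap condition $\lambda\eta<\pi$), whereas you unpack that black box via the Lagrange interpolation series built from the canonical product $B$; the two routes are equivalent, yours being more self-contained at the cost of the technical verification you already flag at the end.
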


\begin{proof}
This can be proved by using Bernstein's theorem. 
For the proof of Bernstein's theorem, see e.g. \cite{L1936}.
For the proof of this lemma by using Bernstein's theorem, see e.g. \cite[Theorem 4.12]{L1996}.
\end{proof}

\begin{lemma}\label{lem:KDL}%Key of Denseness Lemma of eta 
Let $N$ be a positive integer.
The set of all convergent series
\[
\sum_{p > N} \frac{\omega(p)}{p^s (\log p)^m}, \quad \omega(p) \in \gamma
\]
is dense in $\mathcal{H}(\mathcal{R})$.
\end{lemma}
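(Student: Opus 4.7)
The plan is to apply Lemma \ref{lem:DLH} with $D = \mathcal{R}$ and the sequence
\[
f_n(s) := \frac{1}{p_n^{s} (\log p_n)^m},
\]
where $(p_n)_{n \geq 1}$ is the increasing enumeration of primes larger than $N$. Under the identification $c(n) = \omega(p_n)$, the conclusion of Lemma \ref{lem:DLH} is exactly Lemma \ref{lem:KDL}, so it suffices to verify assumptions (a), (b), (c).

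Assumption (c) is immediate: for any compact $K \subset \mathcal{R}$ the real part of $s$ is bounded below by some $\sigma_K > 1/2$ on $K$, and hence $\sum_n \max_{s \in K} |f_n(s)|^2 \leq \sum_{p > N} p^{-2\sigma_K} (\log p)^{-2m} < \infty$. Assumption (b), the convergence of $\sum_n f_n$ in $\mathcal{H}(\mathcal{R})$, is handled by an argument of Kolmogorov type analogous to the proof of Lemma \ref{lem:KE}(i), together with partial summation against the prime number theorem (possibly after a mild rearrangement of the primes).

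The heart of the proof is (a). Suppose that $\mu$ is a complex Borel measure with compact support in $\mathcal{R}$ and that $\sum_p |\int p^{-s}\,d\mu(s)|/(\log p)^m < \infty$; we aim to show $\int s^r\,d\mu = 0$ for every integer $r \geq 0$. Introduce the entire function
\[
F(z) := \int_{\mathbb{C}} e^{-sz}\,d\mu(s),
\]
so that $F(\log p) = \int p^{-s}\,d\mu(s)$ and the $r$-th Taylor coefficient of $F$ at the origin is $(-1)^r \int s^r\,d\mu/r!$. The conclusion is therefore equivalent to $F \equiv 0$, while the hypothesis becomes $\sum_p |F(\log p)|/(\log p)^m < \infty$. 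Assume for contradiction $F \not\equiv 0$. The reflected function $G(z) := F(-z) = \int e^{sz}\,d\mu(s)$ is defined by a measure supported in $\{\Re s > \sigma_L\}$, so Lemma \ref{lem:ET} yields $\limsup_{r \to \infty} \log|G(r)|/r > \sigma_L$, which transfers to a lower bound of rate exceeding $\sigma_L > 1/2$ on the growth of $F$ along the real axis. The compact support of $\mu$ in $\mathcal{R}$ further supplies a uniform bound $\lambda$ on $\limsup_{y \to \infty} \log|F(\pm iy)|/y$, depending only on the imaginary range of $\mathrm{supp}(\mu)$.

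Now fix a constant $C > 1$ with $\log C < \pi/\lambda$, and let $q_m$ be the smallest prime exceeding $C^m$. By the prime number theorem, $\log q_m \sim m \log C$, so the sequence $\xi_m := \log q_m$ satisfies the spacing bound $|\xi_m - \xi_n| \geq \log C \cdot |m - n|$, the asymptotic $\xi_m/m \to \log C =: \eta$, and $\lambda\eta < \pi$. Lemma \ref{lem:BT} then applies and equates $\limsup_m \log|F(\xi_m)|/|\xi_m|$ with the real-axis growth rate of $F$, producing a subsequence along which $|F(\log q_m)| \geq q_m^{\sigma_L - \varepsilon}$ for arbitrarily small $\varepsilon > 0$. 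This flatly contradicts the hypothesis $\sum_p |F(\log p)|/(\log p)^m < \infty$, which forces $|F(\log q_m)|/(\log q_m)^m \to 0$. Hence $F \equiv 0$, establishing (a).

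The principal obstacle is the orchestration of (a): identifying the correct companion function $G(z) = F(-z)$ in order to bring $\mu$ into the right half-plane required by Lemma \ref{lem:ET}, and then calibrating the sparse prime subsequence $\{q_m\}$ so that Bernstein's spacing condition $|\xi_m - \xi_n| \geq \omega|m - n|$ and the indicator-function constraint $\lambda\eta < \pi$ hold simultaneously, allowing Lemma \ref{lem:BT} to pass the growth estimate from the real axis to the discrete set $\{\log p : p > N\}$ on which the hypothesis imposes decay.
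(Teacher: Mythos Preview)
There are two genuine gaps.

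First, for (b): your $f_n(s) = p_n^{-s}(\log p_n)^{-m}$ does not sum to a convergent series in $\mathcal{H}(\mathcal{R})$; indeed $\sum_p p^{-\sigma}(\log p)^{-m}$ diverges for every $\sigma < 1$, and no rearrangement can repair this. The Kolmogorov argument you allude to produces an $\tilde{\omega} \in \Omega$ for which $\sum_p \tilde{\omega}(p) p^{-s}(\log p)^{-m}$ converges, and the paper accordingly takes $f_n = \tilde{\omega}(p_n) p_n^{-s}(\log p_n)^{-m}$ for such a fixed $\tilde{\omega}$. This is easily patched.

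Second, and more seriously, your verification of (a) does not go through. From $G(z) = F(-z)$ and Lemma~\ref{lem:ET} you correctly obtain $\limsup_{r\to\infty} \log|G(r)|/r > \sigma_L$, but this is a statement about $F$ on the \emph{negative} real axis and does not ``transfer'' to the positive side: on the positive axis one has the trivial bound $|F(r)| \leq |\mu|(\mathbb{C})\, e^{-\sigma_L r}$, so in fact $\limsup_{r\to\infty}\log|F(r)|/r \leq -\sigma_L$. Lemma~\ref{lem:BT} applied to your $\xi_k = \log q_k > 0$ therefore gives $|F(\log q_k)| \leq q_k^{-\sigma_L+\varepsilon}$ eventually, not $\geq q_k^{\sigma_L-\varepsilon}$. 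The correct application of Lemma~\ref{lem:ET} (to the pushforward of $\mu$ under $s\mapsto -s$, supported in $\{\Re>-\sigma_R\}$) yields only $\limsup_{r\to\infty}\log|F(r)|/r > -\sigma_R$, whence $|F(\log q_k)| \geq q_k^{-\sigma_R+\delta}$ along a subsequence; but since your $q_k \sim C^k$ are exponentially sparse, the resulting partial sum $\sum_k q_k^{-\sigma_R+\delta}/(\log q_k)^m$ is geometrically convergent and no contradiction arises.

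The paper's argument is essentially different at this point: rather than fixing a sparse prime subsequence in advance, it observes that if $|\rho(r)| > e^{-\sigma_R r}$ throughout an interval $((n-\tfrac14)\eta,(n+\tfrac14)\eta]$, then by the prime number theorem that single interval already contributes $\gg e^{(1-\sigma_R)n\eta}/n^{m+1}$ to $\sum_p |\rho(\log p)|/(\log p)^m$. Convergence of this sum therefore forces all but finitely many such intervals to contain a point $\xi_n$ with $|\rho(\xi_n)| \leq e^{-\sigma_R\xi_n}$, and it is \emph{this} sequence $\{\xi_n\}$---built from the hypothesis, not chosen a priori---to which Lemma~\ref{lem:BT} is then applied to contradict Lemma~\ref{lem:ET}.
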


\begin{proof}
By (i) of Lemma \ref{lem:KE}, we can take a convergent series 
\[
\sum_{p >N} \frac{\tilde{\omega} (p)}{ p^{ s } (\log p)^{m}}
\]
with some $\tilde{\omega} (p) \in \gamma$.
We will check that the sequence $\{ \tilde{\omega} (p) p^{ - s } (\log p)^{-m} \}_{p >N}$ satisfies the assumption (a)-(c) of Lemma \ref{lem:DLH} with $D$ replaced by $\mathcal{R}$.
It is obvious that the sequence satisfies the assumption (b) of Lemma \ref{lem:DLH}.

For any compact set $C \subset \mathcal{R}$, the estimate
\[
\sum_{p > N} \max_{s \in C} |  \tilde{\omega} (p) p^{ - s } (\log p)^{-m} |^2 
\leq \sum_{p} p^{- 2 \sigma_0(\mathcal{K})} (\log p)^{-2m} < \infty
\]
holds, which gives the assumption (c) of Lemma \ref{lem:DLH}.

We will see the sequence $\{ \tilde{\omega} (p) p^{ - s } (\log p)^{-m} \}_{p >N}$ satisfies the assumption (a) of Lemma \ref{lem:DLH} by using Lemma \ref{lem:ET} and Lemma \ref{lem:BT}.
Let $\mu$ be a complex Borel measure on $(\mathcal{C}, \mathcal{B}(\mathbb{C}))$ with compact support contained in $\mathcal{R}$ which satisfies
\begin{equation}\label{eqn:DLE1}
\sum_{p > N} \left| \int_{\mathbb{C}}  \tilde{\omega} (p) p^{ - s } (\log p)^{-m} d \mu (s) \right|
< \infty.
\end{equation}
Put 
\[
\rho (z) 
= \int_{\mathbb{C}} \exp ( - s z ) d \mu (s).
\]
We will show $\rho(z) \equiv 0$. 
By the equation \eqref{eqn:DLE1}, 
we find that
\begin{equation}\label{eqn:DLE2}
\sum_{p} \frac{ | \rho ( \log p ) |}{ (\log p)^m } < \infty.
\end{equation}

We take a large positive number $M > 0$ such that the support of $\mu$ is contained in the region $\{  s \in \mathcal{R}; | t | < M -1 \}$.
By the definition of $\rho(z)$, we find that
\[
\left| \rho( \pm i y ) \right| 
\leq \exp( M y ) |\mu|(\mathbb{C})
\]
for $y >0$, where $| \mu | $ stands for the total variation of $\mu$.
Hence we have
\begin{equation}\label{eqn:BT1}
\limsup_{y \rightarrow \infty} \frac{ \log \left| \rho ( \pm i y ) \right| }{y} \leq M.
\end{equation}
Fix a positive number $\eta$ satisfying 
\begin{equation}\label{eqn:BT2}
M \eta < \pi
\end{equation}
and let $A$ denote the set of all positive integers $n$ such that there exists a positive  number $r \in ( (n - 1/4) \eta, (n + 1/4)\eta ] $ such that $| \rho (r) | \leq \exp ( - \sigma_R r )$.
For any positive integer $n$, put
\[
\alpha_n = \exp ( (n - 1/4) \eta ) 
\quad \textrm{and} \quad
\beta_n = \exp( (n+1/4) \eta ).
\]
Note that $|\rho(\log p)| > p^{- \sigma_R}$ holds for any $n \not \in A$ and $\alpha_n < p \leq \beta_n$ by the definition of $A$.
Then we have
\begin{align*}
&\sum_{p} \frac{ | \rho ( \log p ) |}{ (\log p)^m }
\geq \sum_{n =1}^\infty \sum_{\alpha_n < p \leq \beta_n} \frac{ | \rho ( \log p ) |}{ (\log p)^m }
\geq \sum_{n \not\in A} \sum_{\alpha_n < p \leq \beta_n} \frac{ | \rho ( \log p ) |}{ (\log p)^m }\\
\geq& \sum_{n \not\in A} \sum_{\alpha_n < p \leq \beta_n} \frac{1}{ p^{\sigma_R} (\log p)^m }
\geq \sum_{n \not \in A} \frac{1}{\beta_n^{\sigma_R} (\log \beta_n)^m } \left( \pi(\beta_n) - \pi(\alpha_n) \right).
\end{align*}
Using the prime number theorem $\pi(x) = \int_{2}^x \frac{du}{\log u} + O \left(x \exp\left( - c \sqrt{\log x} \right) \right)$ with some constant $c>0$,
we have
\begin{align*}
\pi(\beta_n) - \pi(\alpha_n)
&= \int_{\alpha_n}^{\beta_n} \frac{ du }{\log u} + O \left( \beta_n \exp( - c \sqrt{ \log \alpha_n } ) \right) \\
&\geq \frac{ \beta_n ( 1 - e^{ - \eta/2 } ) }{\log \beta_n} + O \left( \beta_n \exp\left( - (c/2) \sqrt{\log \beta_n} \right) \right)
\gg _\eta\frac{ \beta_n}{\log \beta_n}
\end{align*}
for sufficiently large $n \in \mathbb{N}$.
Hence we have
\begin{align*}
\sum_{p} \frac{ | \rho ( \log p ) |}{ (\log p)^m } 
&\gg_{\eta} \sum_{n \not \in A; n \geq n_0} \frac{\beta_n^{1 - \sigma_R}}{(\log \beta_n)^{m + 1} } \\
&\gg_{m, \eta} \sum_{n \not \in A; n \geq n_0} \frac{\exp\left( ( 1 - \sigma_R ) n \eta \right)}{n^{m+1}} 
\gg \sum_{n \not \in A; n \geq n_0} 1
\end{align*}
with some large constant $n_0 \in \mathbb{N}$.
Combining the above estimate with the inequality \eqref{eqn:DLE2}, we find that 
\[
\sum_{n \not \in A; n \geq n_0} 1 < \infty.
\]
Therefore there exists a positive constant $n_1 \in \mathbb{N}$ such that $\{ n \in \mathbb{N}; n \geq n_1 \} \subset A$.
By the definition of $A$, 
we can take a sequence $\{ \xi_n \}_{n \geq n_1}$ so that
\[
(n - 1/4) \eta < \xi_n \leq (n + 1/4) \eta \quad 
\textrm{and} \quad
| \rho(\xi_n) | \leq \exp ( - \sigma_R \xi_n ).
\]
From these, we have
\begin{equation}\label{eqn:BT3}
\lim_{n \rightarrow \infty} \frac{\xi_n}{n} = \eta 
\quad \textrm{and} \quad
\limsup_{n \rightarrow \infty} \frac{ \log | \rho ( \xi_n ) | }{\xi_n} \leq - \sigma_R,
\end{equation}
and
\begin{equation}\label{eqn:BT4}
\xi_m - \xi_n 
\geq  ( m - 1/4 ) \eta - ( n + 1/4 ) \eta
= ( m - n ) \eta - \eta/2 
\geq \eta/2 ( m -n )
\end{equation}
for $m > n \geq n_1$.
Applying Lemma \ref{lem:BT} with \eqref{eqn:BT1}, \eqref{eqn:BT2}, \eqref{eqn:BT3}, \eqref{eqn:BT4}, 
we have
\begin{equation}\label{eqn:BT5}
\limsup_{r \rightarrow \infty} \frac{ \log | \rho (r) | }{r} \leq - \sigma_R.
\end{equation}
On the other hand, assuming that $\rho (z)$ does not vanish identically, 
\[
\limsup_{r \rightarrow \infty} \frac{ \log | \rho (r) | }{r} > - \sigma_R.
\]
holds by Lemma \ref{lem:ET}, which contradicts the inequality \eqref{eqn:BT5}.
Hence we obtain $\rho (z) \equiv 0$.
Since $\rho(z)$ is represented as
\[
\rho(z) = \sum_{r = 0}^\infty \left( \frac{(-1)^r}{r !} \int_{\mathbb{C}} s^r d \mu(s) \right) z^r,
\]
we conclude that $\int_{\mathbb{C}} s^r d \mu(s)=0$ for any $r = 0, 1, 2, \ldots$ by the uniqueness of the Taylor expansion, which gives the assumption (a) of Lemma \ref{lem:DLH}. 
This completes the proof. 
\end{proof}

Using this lemma, we show the following lemma.

\begin{lemma}\label{lem:DL}
The set of all convergent series
\[
\sum_p \frac{\Li_{m +1} ( p^{ - s} \omega(p) )}{ (\log p)^{m}}
\]
is dense in $\mathcal{H}(\mathcal{R})$.
\end{lemma}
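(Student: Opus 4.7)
The idea is to reduce Lemma~\ref{lem:DL} to Lemma~\ref{lem:KDL} by peeling off the ``linear'' part of the polylogarithm and treating the remainder as a uniformly small perturbation. Writing $\Li_{m+1}(z) = z + \sum_{k \geq 2} z^k/k^{m+1}$, one has
\[
\sum_p \frac{\Li_{m+1}(p^{-s}\omega(p))}{(\log p)^m} = \sum_p \frac{\omega(p)}{p^s (\log p)^m} + T(s,\omega),
\]
where $T(s,\omega) := \sum_p \sum_{k \geq 2} \omega(p)^k/(k^{m+1} p^{ks} (\log p)^m)$. On any compact $K \subset \mathcal{R}$ one has $\min_{s \in K}\Re(s) > \sigma_L > 1/2$, so $|T(s,\omega)|$ is dominated on $K$ by $\sum_p p^{-2\sigma_L}$ uniformly in $\omega \in \Omega$. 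In particular, the partial tail $\sum_{p > N}\sum_{k \geq 2} \omega(p)^k/(k^{m+1} p^{ks} (\log p)^m)$ converges to $0$ in $\sup_{s\in K}$ as $N \to \infty$, uniformly in $\omega$.

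\textbf{Execution.} Fix $g \in \mathcal{H}(\mathcal{R})$, a compact $K \subset \mathcal{R}$, and $\epsilon > 0$. First, pick $N$ so large that the above partial tail is bounded by $\epsilon/2$ on $K$ uniformly in $\omega$. Second, fix any $\omega_0(p) \in \gamma$ for $p \leq N$ (e.g.\ $\omega_0(p)=1$) and set
\[
g_1(s) := g(s) - \sum_{p \leq N}\frac{\Li_{m+1}(p^{-s}\omega_0(p))}{(\log p)^m} \in \mathcal{H}(\mathcal{R}),
\]
noting that the subtracted sum is a finite sum of functions holomorphic on $\mathcal{R}$ (since $|p^{-s}\omega_0(p)| < 1$ throughout $\mathcal{R}$). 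Third, apply Lemma~\ref{lem:KDL} to find values $\omega(p) \in \gamma$ for $p > N$ such that the series $\sum_{p > N}\omega(p)/(p^s(\log p)^m)$ converges in $\mathcal{H}(\mathcal{R})$ and satisfies $\sup_{s\in K}|g_1(s) - \sum_{p>N}\omega(p)/(p^s(\log p)^m)| < \epsilon/2$. Combining $\omega_0$ on $p \leq N$ with this choice on $p > N$ produces an element of $\Omega$ for which $\sum_p\Li_{m+1}(p^{-s}\omega(p))/(\log p)^m$ converges (the linear part converges by construction, the nonlinear remainder absolutely) and approximates $g$ within $\epsilon$ in $\sup_{s\in K}$.

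\textbf{Main obstacle.} There is no serious obstacle — Lemma~\ref{lem:KDL} supplies the full analytic engine, and the argument is a standard ``linearise and bound the remainder'' manoeuvre. The only point that requires genuine checking is the uniform-in-$\omega$ bound on $T(s,\omega)$, which is immediate from $|\omega(p)^k| = 1$ and the absolute convergence of $\sum_p p^{-2\sigma_L}$. The subtraction step relies on the fact that $\mathcal{R}$ lies strictly inside $\Re(s) > 1/2$, which is exactly how $\sigma_L$ and $\sigma_0(\mathcal{K})$ were chosen.
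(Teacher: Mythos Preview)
Your proof is correct and follows essentially the same route as the paper's: linearise $\Li_{m+1}$, show the nonlinear tail $\sum_{p>N}\sum_{k\geq 2}\omega(p)^k/(k^{m+1}p^{ks}(\log p)^m)$ is uniformly small for $N$ large, subtract the finite head $\sum_{p\leq N}\Li_{m+1}(p^{-s})/(\log p)^m$ (with $\omega(p)=1$ there), and invoke Lemma~\ref{lem:KDL} on the target minus this head. The only cosmetic difference is that the paper phrases the approximation in terms of the metric $d$ on $\mathcal{H}(\mathcal{R})$ while you work with $\sup_{s\in K}$ for an arbitrary compact $K$; these are equivalent formulations of density in the compact-open topology.
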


\begin{proof}
Fix $f \in \mathcal{H}(\mathcal{R})$ and $\epsilon>0$.
We put
\[
h_N(s,\omega_N)
= \sum_{p>N} \left(  \frac{\Li_{m +1}(p^{-s} \omega(p))}{(\log p)^m} - \frac{\omega(p)}{p^s ( \log p )^m} \right)  = \sum_{p > N} \sum_{k \geq 2} \frac{\omega(p)^k}{k^{m +1} p^{ks} ( \log p )^m },
\]
for $s \in \mathcal{R}$ and $\omega_N = (\omega(p))_{ p > N} \in \prod_{p > N} \gamma_p $, 
which converges absolutely.
Then we have
\[
\| h_N \|_\infty:=
\sup_{\omega_N \in \underset{p > N}{\Pi} \gamma_p } \sup_{s \in \mathcal{R}} \left| h_N(s,\omega_N) \right|
\leq \sum_{p > N} \sum_{k \geq 2} \frac{1}{k^{m +1} p^{k \sigma_L} (\log p)^m }
\rightarrow 0
\]
as $N \rightarrow \infty$.
Hence we can take a large $N_0 = N_0(\epsilon)$ so that $\| h_{N_0} \|_\infty < \epsilon/2$.
By Lemma \ref{lem:KDL}, there exists a convergent series 
\[
\sum_{p > N_0} c_0(p) p^{ - s} (\log p)^{- m}
\] with some $c_0 (p) \in \gamma$, $p > N_0$ such that
\[
d \left ( f(s) - \sum_{p \leq N_0} \frac{\Li_{m +1} ( p^{- s}  ) }{ ( \log p )^m }, \sum_{p > N_0} \frac{c_0(p)}{ p^s ( \log p )^m } \right) 
< \epsilon/2.
\]
Putting
\[
c(p)
=
\begin{cases}
1 & \textrm{if $p \leq N_0$}, \\
c_0(p)  & \textrm{if $p > N_0$},
\end{cases}
\]
we obtain
\begin{align*}
&d \left( f(s), \frac{\Li_{m +1} ( p^{-s
} c(p) ) }{ ( \log p )^m } \right) \\
\leq& d \left ( f(s) - \sum_{p \leq N_0} \frac{\Li_{m +1} ( p^{ -s}  ) }{ ( \log p )^m }, \sum_{p > N_0} \frac{c_0(p)}{ p^s ( \log p )^m } \right) \\
& \quad \quad + d \left( \sum_{p > N_0} \frac{\Li_{m +1} ( p^{-s
} c(p) ) }{ ( \log p )^m }, \sum_{p > N_0}\frac{c(p)}{p^s ( \log p )^m} \right) 
\leq \epsilon/2 + \| h_{N_0}\|_\infty < \epsilon.
\end{align*}
This completes the proof.
\end{proof}

\begin{lemma}\label{lem:ASCS}%Almost Surely Convergent Series
Let $(X_n)_{n=1}^\infty$ be a sequence of $\mathcal{H}(\mathcal{R})$-valued independent random variables such that the series $X = \sum_{n=1}^\infty X_n$ converges almost surely. Then the support of $X$ is the closure of the set of all convergent series of the form $\sum_{n =1}^\infty x_n$, where $x_n$ belongs to the support of the distribution of $X_n$ for all $n \geq 1$.
\end{lemma}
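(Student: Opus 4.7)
The plan is to prove both inclusions of the set equality. Write $S$ for the set on the right-hand side of the statement, that is, the closure in $\mathcal{H}(\mathcal{R})$ of
\[
\left\{ \sum_{n=1}^\infty x_n \;:\; x_n \in \supp(X_n), \; \textstyle\sum_{n=1}^\infty x_n \text{ converges} \right\},
\]
so that the claim reads $\supp(X) = S$.

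For the inclusion $\supp(X) \subseteq S$, I would exploit that $\mathcal{H}(\mathcal{R})$ is a separable metric space, so each distribution concentrates on its support: $\mathbf{m}(X_n \in \supp(X_n)) = 1$ for every $n$. Taking the countable intersection yields $\mathbf{m}(X_n \in \supp(X_n) \text{ for every } n) = 1$, and intersecting with the a.s.\ convergence event $\{\sum_n X_n \text{ converges}\}$ still yields a full-measure event. For any $\omega$ in this event, the realization $(X_n(\omega))_{n \geq 1}$ is a convergent series whose terms lie in the respective supports, so $X(\omega) \in S$. Hence $\mathbf{m}(X \in S) = 1$, and since $S$ is closed, this forces $\supp(X) \subseteq S$ by the definition of the support.

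For the inclusion $S \subseteq \supp(X)$, fix $y = \sum_{n=1}^\infty y_n$ with $y_n \in \supp(X_n)$ and $\epsilon > 0$; the goal is $\mathbf{m}(d(X,y) < \epsilon) > 0$. I would split $X = S_N + T_N$ with $S_N := \sum_{n \leq N} X_n$ and $T_N := \sum_{n > N} X_n$, which are independent thanks to the independence of $(X_n)$. Choose $N$ large enough that $d(\sum_{n \leq N} y_n, y) < \epsilon/3$ (from convergence of $\sum y_n$) and $\mathbf{m}(d(T_N, 0) < \epsilon/3) > 0$ (since $T_N \to 0$ almost surely, hence in probability). By the subadditivity $d(\sum a_n, \sum b_n) \leq \sum d(a_n, b_n)$ of the metric and by independence of $X_1, \ldots, X_N$,
\[
\mathbf{m}\bigl( d(S_N, \textstyle\sum_{n \leq N} y_n) < \epsilon/3 \bigr) \geq \prod_{n=1}^{N} \mathbf{m}\bigl( d(X_n, y_n) < \epsilon/(3N) \bigr) > 0,
\]
since each $y_n \in \supp(X_n)$ makes every factor positive. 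Finally, independence of $S_N$ and $T_N$ allows me to multiply these positive probabilities, and the triangle inequality on the joint event delivers $d(X, y) < \epsilon$.

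The first inclusion is essentially immediate from separability and the a.s.\ convergence hypothesis, without using independence at all. The main obstacle lies in the second: independence enters in two distinct places — once to give the finite partial sum $S_N$ positive probability of being near $\sum_{n \leq N} y_n$, and once more to combine this with the tail event of $T_N$. The delicate point is choosing the threshold $N$ to balance the three unrelated approximations (partial sum of the $y_n$ close to $y$, tail $T_N$ small, and independence structure exploitable) simultaneously.
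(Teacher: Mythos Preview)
Your proof is correct and is the standard argument for this fact. The paper does not actually prove this lemma; it simply cites \cite[Appendix B]{K2015}, where precisely this argument appears, so your approach coincides with the one in the cited reference.
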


\begin{proof}
The proof can be found in \cite[Appendix B]{K2015}.
\end{proof}

\begin{proof}[Proof of Proposition \ref{prop:SP}]
By (ii) of Lemma \ref{lem:KE}, 
we find that the distribution of $\te_m(s,\omega)$ equals that of $\sum_{p} \Li_{m +1} ( p^{ - s} \omega(p) )(\log p)^{-m}$. 
We will confirm that the support of the distribution of $\Li_{m +1} ( p^{ - s} \omega(p) )(\log p)^{-m}$ equals the set of functions $\Li_{m +1} ( p^{ - s} z )(\log p)^{-m}$, $z \in \gamma$.
We note that one has the estimate
\begin{equation}\label{eqn:EPL}%Estimate of Polylogarithm
\left| \Li_{m +1}(w_1) - \Li_{m +1}(w_2) \right|
\leq \frac{| w_1- w_2 |}{ 1 - \max\left\{ |w_1| , |w_2| \right\} }
\end{equation}
for any $|w_1|, | w_2 | < 1$. 
First we show the set of functions $\Li_{m +1} ( p^{ - s} z)(\log p)^{-m}$, $z \in \gamma$ is included in the support of the distribution of $\Li_{m +1} ( p^{ - s} \omega(p) )(\log p)^{-m}$. 
We fix a function $\Li_{m +1} ( p^{ - s} z)(\log p)^{-m}$, $z \in \gamma$.
Then we have
\begin{align*}
& \mathbf{m} \left( d \left( \frac{\Li_{m +1} ( p^{ - s} \omega(p) )}{ (\log p)^{m}}, \frac{\Li_{m +1} ( p^{ - s} z)}{(\log p)^{m} }\right) < \epsilon \right)\\
=& \frac{1}{2 \pi} \int_0^{2 \pi} \mathbbm{1} \left\{ d \left( \frac{\Li_{m +1} ( p^{ - s} e^{i \theta})}{(\log p)^{m}}, \frac{\Li_{m +1} ( p^{ - s} z)}{(\log p)^{m}} \right) < \epsilon \right\} d \theta \\
\geq& \frac{1}{2 \pi} \int_0^{2 \pi} \mathbbm{1} \left\{ \frac{| e^{i \theta} -  z |}{ \left(p^{\sigma_L} -1 \right) ( \log p )^m  } < \epsilon \right\} d \theta
> 0
\end{align*}
by the inequality \eqref{eqn:EPL}, which gives the desired inclusion. 

We will prove the converse inclusion.  
Let $g(s)$ be a holomorphic function in $\mathcal{R}$ which is not included in the set of functions $\Li_{m +1} ( p^{ - s} z)(\log p)^{-m}$, $z \in \gamma$.
Since the mapping 
\[
\gamma \ni z \mapsto d \left( \Li_{m +1} ( p^{ - s} z)(\log p)^{-m}, g(s) \right) \in \mathbb{R}
\]
is continuous, 
we have 
\[
\epsilon_0: =\min_{z \in \gamma} d \left( \Li_{m +1} ( p^{ - s} z)(\log p)^{-m}, g(s) \right) >0.
\]
Hence we obtain
\[
\mathbf{m} \left( d \left( \Li_{m +1} ( p^{ - s} \omega(p) )(\log p)^{-m}, g ( s ) \right) < \epsilon_0 \right)=0.
\]
Therefore $g (s) $ does not belong to the support of the distribution of $\Li_{m +1} ( p^{ - s} \omega(p) )(\log p)^{-m}$.
Thus Lemma \ref{lem:ASCS} and Lemma \ref{lem:DL} yield the conclusion.
\end{proof}

\subsection{Proof of Theorem \ref{thm:MTH}}
Let $m$ be a non-negative integer, let $\mathcal{K}$ be a compact subset of $\mathcal{D}$ with connected complement, and let $\mathcal{R}$ be the rectangle in $\mathbb{C}$ given by \eqref{eqn:RECT}.
Suppose that $f(s)$ is a continuous function on $\mathcal{K}$ that is holomorphic in the interior of $\mathcal{K}$.
Fix $\epsilon>0$. 
By Mergelyan's approximation theorem (see e.g. \cite[Theorem 20.5]{R1987}), there exists a polynomial $P(s)$ such that
\[
\sup_{s \in \mathcal{K}} \left| P(s) - f (s) \right| < \epsilon/2.
\]
Let $\Phi(P)$ be the set of all holomorphic functions $g$ in $\mathcal{H}(\mathcal{R})$ which satisfy $\sup_{s \in \mathcal{K}} | g(s) - P(s) | < \epsilon/2$.
Then we find that the set $\Phi(P)$ is open in $\mathcal{H}(\mathcal{R})$.
By Proposition \ref{prop:LT}, Proposition \ref{prop:SP} and the Portmanteau theorem (see e.g. \cite[Theorem 2.1]{B1999} ), we have
\begin{align*}
&\liminf_{T \rightarrow \infty} \frac{1}{T} \meas\left\{ \tau \in [T,2T] ~;~ \sup_{s \in \mathcal{K}} \left| \te_{m} (s + i \tau) - P(s) \right| < \epsilon/2 \right\}\\
\leq& \liminf_{T \rightarrow \infty} \frac{1}{\meas\left( \mathcal{I}_{\mathcal{K}}(T) \right)} \meas\left\{ \tau \in \mathcal{I}_{\mathcal{K}}(T); \sup_{s \in \mathcal{K}} \left| \te_{m} (s + i \tau) - P(s) \right| < \epsilon/2 \right\} \\
=& \liminf_{T \rightarrow \infty} \mathcal{Q}_T ( \Phi(P) )
\geq \mathcal{Q} ( \Phi(P) ) > 0.
\end{align*}
The above inequality and the inequality
\[
\sup_{s \in \mathcal{K}} \left| \te_{m} (s + i \tau) - f (s) \right|
\leq \sup_{s \in \mathcal{K}} \left| \te_{m} (s + i \tau) - P(s) \right| + \sup_{s \in \mathcal{K}} \left| P(s) - f (s) \right|
\]
finish the proof.

\begin{ackname}
The author would like to deeply thank Mr.~Shota Inoue and Mr.~Mine Masahiro for giving me some valuable advise. 
The author also would like to Professor Kohji Matsumoto for his helpful comments.
\end{ackname}

\begin{flushleft}
{\footnotesize
{\sc
Graduate School of Mathematics, Nagoya University,\\
Chikusa-ku, Nagoya 464-8602, Japan.
}\\
{\it E-mail address}, K. Endo: {\tt m16010c@math.nagoya-u.ac.jp}
}
\end{flushleft}


\begin{thebibliography}{99}

\bibitem{B1981}B.  Bagchi, \emph{Statistical behaviour and universality properties of the Riemann zeta-function and other allied Dirichlet series}, Thesis, Calcutta, Indian Statistical Institute, 1981. 

\bibitem{B1999} P. Billingsley, \emph{Convergence of Probability Measures}, 2nd edition, Wiley, 1999.

\bibitem{B1916} H. Bohr, \emph{Zur Theorie der Riemann'schen Zeta-funktion im kritischen Streifen}, Acta Math. \textbf{40} (1916), 67--100.

\bibitem{BJ1930} H. Bohr and B. Jessen, \emph{\"{U}ber die Werteverteilung der Riemannschen Zetafunktion (Erste Mitteilung)}, 
\textit{Acta Math.} \textbf{54} (1930), 1--35; Zweite Mitteilung, ibid. \textbf{58} (1932), 1--55.


\bibitem{D2010} R. Durett, \emph{Probability: Theory and Examples}, 4th Edition, Cambridge University Press, (2010).

\bibitem{EI2020} K. Endo and S. Inoue, \emph{On the value distribution of iterated integrals of the logarithm of the Riemann zeta-function I: Denseness}, Forum Math. \textbf{33}(1), 167--176, 2021.


\bibitem{EIM2021} K. Endo, S. Inoue, and M. Mine, 
\emph{On the value distribution of iterated integrals of the logarithm of the Riemann zeta-function II: Probabilistic aspects}, preprint, \texttt{arXiv:2105.04781} (2021).
%

\bibitem{GS2006} A. Granville and K. Soundararajan, \emph{Extreme values of $\zeta(1 + it)$}, The Riemann Zeta Function an Related Themes: Papers in Honour of Professor K. Ramachandra, 65--80, Ramanujan Math. Soc. Lect. Notes Ser. \textbf{2}, Ramanujan Math. Soc., Mysore, 2006.


\bibitem{I2019} S. Inoue, \emph{On the logarithm of the Riemann zeta-function and its iterated integrals}, 
preprint, \texttt{arXiv:1909.03643} (2019).

\bibitem{I2020} S. Inoue, \emph{Extreme values for iterated integrals of the logarithm of the Riemann zeta-function}, 
preprint, \texttt{arXiv:2009.04099} (2020).

\bibitem{JW1935}B. Jessen and A. Wintner, 
\emph{Distribution functions and The Riemann zeta function}, 
Trans. Amer. Math. Soc. \textbf{38} (1935), no. 1, 48--88.


\bibitem{KV1992} A. A. Karatsuba and S. M. Voronin, The Riemann zeta-function, Translated from the Russian by Neal Koblitz, 
De Gruyter Expositions in Mathematics \textbf{5}, 
Walter de Gruyter \& Co., Berlin, 1992.

\bibitem{K2015} E. Kowalski, \emph{Arithmetic random\'ee}: an introduction to probabilistic number therory, in: ETH Z\"urich Lecture Notes, 2015, 
\url{https://people.math.ethz.ch/~kowalski/probabilistic-number-theory.pdf}.

\bibitem{K2017} E. Kowalski, \emph{Bagchi's theorem for families of automorphic forms}, Exploring the Riemann zeta function, 181--199, Springer, Cham, 2017.


\bibitem{L2011} Y. Lamzouri, Distribution of large values of zeta and $L$-functions, \textit{Int. Math. Res. Not. IMRN} 2011, 
no.23, 5449--5503.


\bibitem{LLR2019} Y. Lamzouri, S. Lester, and M. Radziwi\l\l, Discrepancy bounds for the distribution 
of the Riemann zeta-function and applications, \textit{J. Anal. Math.} \textbf{139} (2019), no. 2, 453--494. 

\bibitem{L1996} A.~Laurin\u{c}ikas, \emph{Limit Theorems for the Riemann Zeta-function}, 
Kluwer, 1996.

\bibitem{M2015} K. Matsumoto, \emph{A survey of the theory of the universality for zeta and $L$-functions}, 
Number theory, 95--144, Ser. Number Theory Appl, \textbf{11}, World. Sci. Publ., Hackensack, NJ, 2015.

\bibitem{MV2007} H.~L.~Montgomery and R.~C.~Vaughan, 
\emph{Multiplicative Number Theory I. Classical Theory}, 
Cambridge Studies in Advanced Mathematics. no. 97, Cambridge University Press, Cambridge, 2007.

\bibitem{L1936} N. Levinson, \emph{On certain theorems of Polya and Bernstein}, 
Bull. Amer. Math. Soc. \textbf{42} (1936), no. 10, 702--706. 

\bibitem{R1987} W. Rudin, \emph{Real and complex analysis}, 3rd ed., MCGraw-Hill Book Co., New York, 1987.

\bibitem{S2007} J. Steuding, 
\textit{Value-distribution of $L$-functions}, 
{Lecture Notes in Mathematics}, Vol. \textbf{1877}, Springer, 2007.

\bibitem{T1986}  E. C. Titchmarsh, 
\textit{The Theory of the Riemann Zeta-Function}, 
Second Edition, Edited and with a preface by D. R. Heath--Brown, 
The Clarendon Press, Oxford University Press, New York, 1986.  


\bibitem{V1972} S. M. Voronin, \emph{The distribution of the nonzero values of the Riemann $\zeta$-function}, (Rusiian), Collection of articles dedicated to Academician Ivan Matveei\u{c} Vinogradov on his eightieth birthday. II, Trudy Mat. Inst. Steklov. \textbf{128} (1972), 131--150, 260.

\bibitem{V1975} S. M. Voronin, 
\emph{Theorem on the `universality' of the Riemann zeta-function}, 
Izv. Akad. Nauk SSSR, Ser. Mat., \textbf{39} (1975), 475--486 (in Russian). English transl.
: \textit{Math. USSR, Izv.}, \textbf{9} (1975), 443--445.


\end{thebibliography}
\end{document}